\theoremstyle{plain}
\newtheorem{lem}{Lemma}
\newtheorem{rem}{Remark}
\newtheorem{theorem}{Theorem}
\def\beh#1{{\color{black}#1}}
\theoremstyle{definition}
\newtheorem{definition}{Definition}
\newtheorem{assumption}{Assumption}
\newcommand{\E}{\mathrm{E}}
\newcommand{\Var}{\mathrm{Var}}
\newcommand{\Tr}{\mathrm{Tr}}
\def\fb{\mathbf{f}}
\def\R{\mathbb{R}}
\def\Wb{\mathbf{W}}
\def\Z{\mathbb{Z}}
\def\zb{\mathbf{z}}
\def\zx{\mathbf{x}}
\def\zxk{\hat{\zx}^{\tau,\boldsymbol{\kappa}}}
\def\ze{\mathbf{e}}
\def\zq{\mathbf{q}}
\def\zz{\mathbf{0}}
\def\zhf{\mathbf{\hat f}}
\def\zbx{\mathbf{X}}
\def\ab{\mathbf{a}}
\def\bb{\mathbf{b}}
\def\vb{\mathbf{v}}
\begin{document}
	\title{Non-Convex Distributed Optimization}
	\author{Tatiana Tatarenko\thanks{Department of Control Theory and Robotics, TU Darmstadt, Germany. Email: tatarenk@rtr.tu-darmstadt.de} and Behrouz Touri\thanks{Department of Electrical, Computer, and Energy Engineering, Boulder, USA. Email: behrouz.touri@colorado.edu}}
	\date{}
\maketitle

\begin{abstract}
We study distributed non-convex optimization on a time-varying multi-agent network. Each node has access to its own smooth local cost function, and the collective goal is to minimize the sum
of these functions. {The perturbed push-sum algorithm was previously used for convex distributed optimization.}
We generalize the result obtained {for the convex case} to the case of non-convex functions. Under some additional technical assumptions on the gradients we prove the convergence of the distributed push-sum algorithm to some critical point of the objective function. By utilizing perturbations on the update process, we show the almost sure convergence of the perturbed dynamics to a local minimum of the global objective function, \beh{if the objective function has no saddle points}. Our analysis shows that this perturbed procedure converges at a rate of $O(1/t)$.
\end{abstract}

\section{Introduction}\label{sec:intro}
Due to emergence of the distributed networked systems, distributed multi-agent optimization problems have gained a lot of attention over the recent years. These systems include, but are not limited to, robust
sensor network control~\cite{Rabbat}, signal processing~\cite{touri2010infinite,olshevskythes}, power control~\cite{RamVN09}, network routing~\cite{Neglia}, machine learning~\cite{ML}, opinion dynamics \cite{mohajer2012convergence,touri2011discrete,etesami2013termination}, and spectrum access coordination~\cite{LiHan}. In all these areas a number of agents, which can be represented by nodes over communication graphs, are required to optimize a global objective without any centralized computation unit and taking only the local information into account.

Most of the theoretical work on the topic was devoted to optimization of a sum of convex functions, where the assumption on subgradient existence is essentially used. We refer the reader to the following papers that consider various settings on network topology, asynchronous updating rules, and noisy communication~\cite{Chen, gharesifard2014distributed, LobelO11, LobelOF11, NedicO09, touri2015continuous, touri2010asynchronous}.
However, in many applications it is crucial to have an efficient solution for \emph{non-convex optimization} problems~\cite{Baras,Scutari,nonconDOP}. In particular, resource allocation problems with non-elastic traffic were studied in~\cite{nonconDOP}.
Such applications cannot be modeled
by means of concave utility functions, that renders a problem a non-convex optimization.
{
Another example of non-convex optimization can be found in machine learning, where the independent component analysis~\cite{ML} needs to be performed for big data by a network of processors. The cumulative loss function is represented by the sum of errors of the model on a cross-validation sample. Due to the increasing size of available data and problem dimensionality, the cumulative loss is to be minimize in a distributed manner, where each error function is assigned to a different processor, and processors can exchange the information only with their local neighbours in a network~\cite{NIPS2012_4566}.
Since these problems refer to a non-convex optimization, some more sophisticated solution techniques are needed to handle them.
}

In~\cite{Baras}, the authors proposed a distributed algorithm for non-convex constrained optimization based on a first order numerical method. However, the convergence to local minima is guaranteed only under the assumption that the communication topology is time-invariant, the initial values of the agents are close enough to a local minimum, and a sufficiently small step-size is used. An approximate dual subgradient algorithm over time-varying network topologies was proposed in~\cite{Martinez}. This algorithm converges to a pair of primal-dual solutions to the approximate
problem
given the Slater's condition for a constrained optimization problem, and under the assumption that the optimal solution set of the dual limit is singleton.
{Recently, ~\cite{ADMM, ADMM1}  provided an analysis of the alternating direction method of multipliers and the alternating direction penalty method applied to non-convex settings. Convergence to a primal feasible point under mild conditions was proved. Some additional conditions were introduced to ensure the first order necessary conditions for local optimality for a limit point.}

{In this paper we study the push-sum protocol, which is a special case of consensus dynamics, applied to distributed optimization. The idea behind such consensus-based algorithm is the combination of a consensus dynamics and a gradient descent along the agent's local function.
The push-sum algorithm was initially introduced in~\cite{16A2} and used in~\cite{Tsianos2012} for distributed optimization.}
 {The main advantage of the push-sum protocol is that it can be utilized to overcome the restrictive assumptions on the communication graph structure such as double stochastic communication matrices~\cite{16A2, Tsianos2012}.}
The work~\cite{A1} studied this algorithm over \emph{time-varying communication} in the case of well-behaved convex functions. Similar to~\cite{A1}, we focus on the perturbed push-sum algorithm. In our work we relax the assumption on the convexity and apply this algorithm to a more general case.

The main contribution of this paper is the proof \textit{of convergence of the push-sum distributed optimization algorithm to a critical point of a sum of non-convex smooth functions} under some general assumptions on the underlying functions and general connectivity assumptions on the underlying graph. Moreover, a \textit{perturbation of the algorithm allows us to use the algorithm to search local optima} of this sum. It means that the new stochastic procedure steers every node to some local minimum of the objective function from any initial state. In our analysis we use the result on stochastic recursive approximation procedures extensively studied in~\cite{NH}. We also present the analysis of the convergence rate for the procedure under consideration.

  Some other works dealt with the distributed stochastic approximation with applications to non-convex optimization~\cite{Bianchi, NonDS}. The authors in~\cite{Bianchi} demonstrated convergence of the distributed stochastic approximation procedure under a specific communication protocol to the set of critical points in unconstrained optimization. In~\cite{NonDS} the authors generalized this procedure to the case of constrained optimization by considering a projected version of the gradient step and proved convergence of this algorithm to Karush-Kuhn-Tucker (KKT) points. However, critical points and KKT points represent only a necessary condition of local minima. Moreover, the communication protocol proposed in~\cite{Bianchi, NonDS} relied on a double-stochastic communication matrix on average, and, thus, required a double-stochastic matrix in the case of deterministic communication, which may also restrict the range of potential applications. In contrast to the papers above, this work presents the push-sum 
communication  protocol requiring agents to know only the number of their out-neighbors. Moreover, this protocol allows the stochastic gradient step to lead the system not only toward some critical point, but to a local minimum almost surely \beh{in the absence of saddle points.  Note that none of the existing first order methods can guarantee avoidance of saddle points without further assumptions on the second derivative \cite{borkar2008stochastic,ljung1978strong,dauphin}. As the push-sum algorithm presented in this paper does not rely on assumptions on second order derivatives, the procedure converges either to a local minimum or to a saddle point in the presence of a latter one.}

The paper is organized as follows. In Section~\ref{sec:dop} we formulate the distributed optimization problem of interest and introduce the push-sum algorithm that we adopt further to solve this problem. Section~\ref{sec:prelim} presents some important preliminaries for the theoretical analysis. Section~\ref{sec:mainresults} contains the main results on the convergence to critical points and local optima. Section~\ref{sec:proofs} provides the proofs of the main results. Section~\ref{sec:sim} contains an illustrative example of implementation of the proposed algorithm for a non-convex optimization problem. Finally, Section~\ref{sec:conclude} concludes the paper.

\textbf{Notation:} We will use the following notations throughout this paper: We denote the set of integers by $\Z$ and the set of non-negative integers by $\Z^+$. For the metric $\rho$ of a metric space $(X,\rho(\cdot))$ and a subset $B\subset X$, we let $\rho(x,B)=\inf_{y\in B}\rho(x,y)$. We denote the set $\{1,\ldots,n\}$ by $[n]$. We use boldface to distinguish between the vectors in a multi-dimensional space and scalars. We denote the dot product of two vectors $\ab$ and $\bb$ by $(\ab,\bb)$. Throughout this work, all time indices such as $t$ belong to $\Z^+$. For vectors $\vb_i\in X^d$, $i\in[n]$, of elements in some vector space $X$ (over $\R$), we let $\bar{\vb}=\frac{1}{n}\sum_{i=1}^n\vb_i$.

\section{Distributed Optimization Problem}\label{sec:dop}
In this section, we provide the problem formulation.

\subsection{Problem Formulation}\label{subsec:prform}
Consider a network of $n$ agents. At each time $t$, node $i$ can only communicate to  its  out-neighbors  in  some  directed  graph $G(t)$, where the graph $G(t)$ has the vertex  set $[n]$ and the edge set $E(t)$. We introduce the following standard definition for the sequence $G(t)$.
\smallskip

\begin{definition}
	We say that a sequence of graphs $\{G(t)\}$ is \textit{$S$-strongly connected}, if for any time $t\geq 0$, the graph
	\[G(t:t+S)=([n],E(t)\cup E(t+1)\cup \cdots \cup E(t+S-1)),\]
	is strongly connected. In other words, the union of the graphs over every $S$ time intervals is strongly connected.
\end{definition}
\smallskip

The assumption on the $S$-strongly connected sequence of communication graphs has been used in many prior works~\cite{NedicO09,A1,tsitsiklis1986} to ensure enough mixing of information among the agents.

We use $N^{in}_i(t)$ and $N^{out}_i(t)$ to denote the in- and out-neighborhoods of node $i$ at time $t$. Each node $i$ is always considered to be an in- and out-neighbor of itself. We
use $d_i(t)$ to denote the out-degree of node
$i$, and we assume that every node $i$ knows its out-degree at every time $t$.

The goal of the agents is to solve distributively the following minimization problem:
\begin{align}\label{eq:DOP}
\min_{\zb\in\mathbb{R}^d}  F(\zb) = \sum_{i=1}^{n}F_i(\zb).
\end{align}
The essential assumption in many distributed optimization problems is that the function $F_i:\R^d\to\R$ is only available to agent $i$.

In this paper we introduce the following assumption on the gradients of $F_i$:
\smallskip

\begin{assumption}\label{assum:A1}
 Each function $F_i(\zb)$, $i\in [n]$, has the gradient $\fb_i(\zb)=\nabla F_i(\zb)$ such that the norm of $\fb_i(\zb)$ is uniformly bounded by some finite constant $\alpha\ge 0$ for each $i\in [n]$, i.e.\  $\|\fb_i(\zb)\|\leq \alpha$ for all $\zb\in \R^d$ and $i\in [n]$.
\end{assumption}
\smallskip

Note that Assumption~\ref{assum:A1} is a standard assumption that is made in many of the previous works on subgradient methods (including~\cite{A1}). {We will further assume that a solution of~\eqref{eq:DOP} exists, and the set of critical points of the objective function $F(\zb)$, i.e. the set of points $\zb$ such that $\nabla F(\zb)=\zz$, is a union of finitely many connected components.}

\section{Preliminaries}\label{sec:prelim}
{
This section is devoted to preliminaries of our main approach to study the problem~\eqref{eq:DOP}.

\subsection{Stochastic Recursive Approximation Procedure}\label{subsec:prelsrp}
In the following sections we propose an approach to the distributed optimization problem~\eqref{eq:DOP} based on the convergence of stochastic approximation procedures analyzed in~\cite{NH}.
For this purpose, we consider a $d$-dimensional non-autonomous nonlinear stochastic process $\{\zbx(t)\}_t$ taking values in $\mathbb{R}^d$ and being expressed in the form
\begin{align}\label{eq:basic}
\zbx(t+1)=\zbx(t)-a(t+1)\tilde{\fb}(t,\zbx(t))-a(t+1)\Wb(t+1),
\end{align}
where $\zbx(0)=\zx_0\in \R^d$, $\tilde{\fb}(t,\zbx(t))=\fb(\zbx(t))+\zq(t,\zbx(t))$ such that
$\fb: \mathbb{R}^d\to\mathbb{R}^d$, $\zq(t,\zbx(t)): \mathbb{R}\times\mathbb{R}^d\to\mathbb{R}^d$,
$\{\Wb(t)\}_t$ is a sequence of Markov random vectors, i.e.
\[\Pr \{\Wb(t+1)|\zbx(t),\zbx(t-1),\ldots,\zbx(0)\}=\Pr\{\Wb(t+1)|\zbx(t)\},\]
 and $a(t)>0$ is a step-size parameter. }

We proceed by presenting some important results on the convergence of stochastic approximation procedures from~\cite{NH}.
Let $\{x(t)\}_t$, $t\in \Z^+$, be a general Markov process on some state space $X\subset \R^d$. The transition kernel of this chain, namely $\Pr\{x(t+1)\in\Gamma| x(t)=x\}$, is denoted by $P(t,x,t+1,\Gamma)$, $\Gamma\subseteq X$.
\smallskip

\begin{definition}\label{def:def1}
The operator $L$ defined on the set of measurable functions $V:\Z^+\times X\to \R$, $x\in X$, by
\begin{align*}
LV(t,x)&=\int{P(t,x,t+1,dy)[V(t+1,y)-V(t,x)]}\cr
&=\E[V(t+1,x(t+1))\mid x(t)=x]-V(t,x).
\end{align*}
is called the \emph{generating operator} of a Markov process $\{x(t)\}_t$.
\end{definition}
\smallskip

In the case of a deterministic process $\{x(t)\}_t$ on $X$, the generating operator reduces to the difference equation
\begin{align*}
 LV(t,x)=V(t+1,x(t+1))-V(t,x(t)).
\end{align*}

Let $B$ be a subset of $X$, $U_{\epsilon}(B)$ be its $\epsilon$-neighborhood, i.e. $U_{\epsilon}(B)=\{x: \rho(x,B)<\epsilon\}$.
Let $V_{\epsilon}(B)=X\setminus U_{\epsilon}(B)$ and $U_{\epsilon,R}(B)=V_{\epsilon}(B)\cap \{x:\|x\|<R\}$.
\smallskip

\begin{definition}\label{def:def2}
The function $\phi(t,x)$ is said to belong to class $\Phi(B)$ if

1) $\phi(t,x)\ge 0$ for all $t\in \Z^+$ and $x\in X$,

2) for all $R>\epsilon >0$ there exists some $Q=Q(\epsilon, R)$ such that
\begin{align*}
 \inf_{t\ge Q, x\in U_{\epsilon,R}(B)}\phi(t,x)>0.
\end{align*}
\end{definition}
\smallskip

Now we turn back to the process~\eqref{eq:basic} and formulate the following theorem, which was proven in~\cite{NH} (Theorem 2.7.3).
\smallskip

\begin{theorem}\label{th:th1}
  Consider the dynamics defined by~\eqref{eq:basic}, where the random term $\Wb(t)$ has zero mean and bounded variance. Let us suppose that the set $B=\{\zx: \fb(\zx)=\zz\}$ be a {union of finitely many connected components. Assume that there exist a set $B$, sequences $a(t)$, $g(t)$}, and some function $V(t,\zx): \mathbb{Z}^+\times\mathbb{R}^d\to\mathbb R$ that satisfy the following assumptions:

 \begin{enumerate}[(1)]
 	\item\label{th1:cond1}  For all $t\in \Z^+$, $\zx\in \mathbb{R}^d$, we have $V(t,\zx)\ge 0 $ and $\inf_{t\ge0}V(t,\zx)\to\infty$ as $\|\zx\|\to\infty$,
	\item\label{th1:cond2}  $LV(t,\zx)\le -a(t+1)\phi(t,\zx) + g(t)(1+V(t,\zx))$, where  $\phi\in\Phi(B)$, $g(t)\ge 0$, $\sum_{t=0}^{\infty}g(t)<\infty$,
	\item\label{th1:cond3}  $a(t)>0$, $\sum_{t=0}^{\infty} a(t)= \infty$, and $\sum_{t=0}^{\infty} a^2(t)<\infty$, and there exists $c(t)$ such that $\|\zq(t,\zbx(t))\|\le c(t)$ a.s.\ for all $t\ge0$ and $\sum_{t=0}^{\infty}a(t+1)c(t)<\infty$, and
	\item\label{th1:cond4} $\sup_{t\ge0,\|\zx\|\le r}\|\tilde{\fb}(t,\zx)\|<\infty$ {for any $r$.}
\end{enumerate}
  {Then the process~\eqref{eq:basic} converges almost surely either to a point from $B$ or to the boundary of one of its connected components}.
\end{theorem}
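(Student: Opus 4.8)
The plan is to read Theorem~\ref{th:th1} as a Lyapunov-type stochastic-approximation result and to run the argument through the almost-supermartingale (Robbins--Siegmund) theorem applied to the scalar process $V(t,\zbx(t))$. First I would rewrite condition~(2) in conditional-expectation form. Since, by Definition~\ref{def:def1}, $LV(t,\zbx(t))=\E[V(t+1,\zbx(t+1))\mid\mathcal F_t]-V(t,\zbx(t))$ with $\mathcal F_t$ the natural filtration, condition~(2) becomes
\begin{align*}
\E[V(t+1,\zbx(t+1))\mid\mathcal F_t]\le(1+g(t))\,V(t,\zbx(t))+g(t)-a(t+1)\phi(t,\zbx(t)).
\end{align*}
Because $a(t+1)>0$ and $\phi\ge0$, the subtracted term is non-positive, so $\{V(t,\zbx(t))\}$ is a non-negative almost-supermartingale whose multiplicative and additive perturbations are both summable ($\sum_t g(t)<\infty$). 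The Robbins--Siegmund theorem then gives, almost surely, that $V(t,\zbx(t))$ converges to a finite limit $V_\infty$ and that $\sum_t a(t+1)\phi(t,\zbx(t))<\infty$.

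Second, I would extract boundedness and the approach to $B$. The coercivity in condition~(1), $\inf_{t\ge0}V(t,\zx)\to\infty$ as $\|\zx\|\to\infty$, together with $\sup_t V(t,\zbx(t))<\infty$ from the first step, confines every sample path to a (random) bounded set $K$; condition~(4) makes $\tilde{\fb}$ bounded on $K$, and the increment $\zbx(t+1)-\zbx(t)=-a(t+1)\tilde{\fb}(t,\zbx(t))-a(t+1)\Wb(t+1)$ then has a drift part of order $a(t+1)\to0$ and a noise part that is square-summable because $\sum_t a^2(t)<\infty$ and $\Wb$ has bounded variance; hence the increments tend to $\zz$ almost surely. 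To obtain $\rho(\zbx(t),B)\to0$ I would first note that $\sum_t a(t+1)\phi(t,\zbx(t))<\infty$ while $\sum_t a(t+1)=\infty$ forces $\liminf_t\rho(\zbx(t),B)=0$: otherwise the iterates would eventually lie in a fixed $U_{\epsilon,R}(B)$, where the defining property of $\phi\in\Phi(B)$ (Definition~\ref{def:def2}) gives $\phi(t,\zbx(t))\ge\delta>0$ for all large $t$ and the weighted sum would diverge. The vanishing increments then rule out repeated excursions from $B$ to distance $\epsilon$ and back, upgrading $\liminf=0$ to $\rho(\zbx(t),B)\to0$. The summable-bias hypothesis $\sum_t a(t+1)c(t)<\infty$ on $\zq$ in condition~(3) is precisely what makes the limit set the zero set of $\fb$ rather than of the perturbed drift $\tilde{\fb}$.

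Third, I would upgrade ``distance to $B$ vanishes'' into convergence of $\zbx(t)$ to a single limit point in $B$ or on the boundary of one connected component. Here I would use that $B$ consists of \emph{finitely many} connected components, so distinct components lie at positive mutual distance. Because the martingale $\sum_s a(s+1)\Wb(s+1)$ converges almost surely (again from $\sum_t a^2(t)<\infty$ and bounded variance) and the step sizes vanish, the accumulated perturbation is eventually too small to carry the iterate across the gap between two components infinitely often; hence the limit set of $\{\zbx(t)\}$ is contained in the closure of a single component. Along such a component $\fb\equiv\zz$ and $\phi$ provides no contraction, so the already-established convergence of $V(t,\zbx(t))$ together with the vanishing increments pins the limit set to one point, which is either interior to the component (a point of $B$) or a boundary point of it.

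The hard part will be this last step. The first two paragraphs are the standard Lyapunov/Robbins--Siegmund machinery, whereas converting the weak conclusion $\rho(\zbx(t),B)\to0$ into genuine convergence to a single point requires excluding two failure modes: the iterate drifting endlessly along a connected component (on which $\phi$ is uninformative), and oscillating between two components. Both are controlled only through the joint use of the vanishing step size, the square-summability of the noise, and the finiteness of the component decomposition, and it is this localization argument --- carried out as Theorem~2.7.3 of~\cite{NH} --- that constitutes the technical core of the statement.
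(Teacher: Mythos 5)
The first thing to note is that the paper contains no proof of Theorem~\ref{th:th1} to compare against: it is imported verbatim from \cite{NH} (Theorem 2.7.3), so your proposal has to stand on its own. Its first two paragraphs largely do: reading condition~(2) in conditional form and applying Robbins--Siegmund to the nonnegative sequence $V(t,\zbx(t))$ correctly yields a.s.\ convergence of $V(t,\zbx(t))$ together with $\sum_t a(t+1)\phi(t,\zbx(t))<\infty$; coercivity in condition~(1) then confines each trajectory to a bounded set, and the defining property of $\Phi(B)$ plus $\sum_t a(t)=\infty$ forces $\liminf_{t}\rho(\zbx(t),B)=0$. One imprecision there: the upgrade to $\lim_t\rho(\zbx(t),B)=0$ does not follow from ``vanishing increments'' alone --- a sequence whose increments tend to zero can still make infinitely many excursions. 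What closes this step is a cost argument: any excursion from $U_{\epsilon/2}(B)$ to the complement of $U_{\epsilon}(B)$ requires displacement at least $\epsilon/2$; since the noise martingale $\sum_s a(s+1)\Wb(s+1)$ converges a.s.\ (so its tails are uniformly small), the drift must supply that displacement, which makes each excursion cost a fixed positive amount of the convergent sum $\sum_t a(t+1)\phi(t,\zbx(t))$; hence only finitely many excursions occur.

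The genuine gap is in your third paragraph. You assert that the limit set is always a single point; that is strictly stronger than the conclusion of Theorem~\ref{th:th1}, and it is false in general --- the alternative ``or to the boundary of one of its connected components'' is in the statement precisely because point convergence can fail when a component of $B$ is a continuum. Your justification (convergence of $V(t,\zbx(t))$ plus vanishing increments ``pins'' the limit set) cannot work: $V$ is typically constant on a connected component of $B$ (in the paper's application $V=F+C$ with $\nabla F=\zz$ there), so convergence of $V(t,\zbx(t))$ carries no information about position within the component, and vanishing increments permit the limit set to be any continuum inside it. What is actually needed is a different dichotomy: if the limit set meets the \emph{interior} of a component $C$, pick a late time $t_0$ at which $\zbx(t_0)$ lies in a ball contained in $C$ and the tails $\sum_{t\ge t_0} a(t+1)c(t)$ and $\sup_{T}\left\|\sum_{t=t_0}^{T} a(t+1)\Wb(t+1)\right\|$ are small; since $\fb\equiv\zz$ on $C$, the process never leaves that ball and equals $\zbx(t_0)$ minus the tail of an a.s.\ convergent series, hence converges to a single point of $B$. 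If instead the limit set avoids the interior, it is a connected subset of the boundary of $C$, which is the second alternative. Your proposal contains neither this case split nor a substitute for it, and in the end defers ``the technical core'' to the very citation \cite{NH} (Theorem 2.7.3) that the paper itself invokes; so, as a self-contained argument, it is incomplete at exactly the step separating the theorem's conclusion from the weaker claim $\rho(\zbx(t),B)\to 0$.
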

\smallskip

\begin{rem}
Note that Theorem~\ref{th:th1} holds in the deterministic case of the process~\eqref{eq:basic}, when $\Wb(t)\equiv \zz$ for all $t\ge0$.
\end{rem}

\subsection{Perturbed Push-Sum Algorithm for Distributed Optimization}\label{subsec:pertps}
Here we discuss the general push-sum algorithm initially proposed in~\cite{16A2}, applied in~\cite{Tsianos2012} to distributed optimization of convex functions, and analyzed in~\cite{A1}.
For this algorithm, we have a network of $n$ agents introduced in Section~\ref{subsec:prform}.
According to the general push-sum protocol, at every moment of time $t\in \Z^+$ each node $i$ maintains   vector   variables $\zb_i(t)$, $\zx_i(t)$, $\boldsymbol w_i(t)\in \mathbb{R}^d$, as  well  as  a  scalar  variable $y_i(t)$ with $y_i(0)=1$ for all $i\in[n]$. These quantities are updated according to the following rules:
\begin{align}\label{eq:gps}
\boldsymbol w_i(t+1)&=\sum_{j\in N^{in}_i(t)}\frac{\zx_j(t)}{d_j(t)},\cr
y_i(t+1)&=\sum_{j\in N^{in}_i(t)}\frac{y_j(t)}{d_j(t)},\cr
\zb_i(t+1)&=\frac{\boldsymbol w_i(t+1)}{y_i(t+1)},\cr
\zx_i(t+1)&=\boldsymbol w_i(t+1)+\ze_i(t+1),
\end{align}
where $\ze_i(t)$ is some $d$-dimensional, possibly random, perturbation at time $t$.
\smallskip
\begin{theorem}~\cite{A1}\label{th:th2}
Consider  the  sequences $\{\zb_i(t)\}_t$, $i\in [n]$, generated  by the algorithm~\eqref{eq:gps}. Assume that the graph sequence
$\{G(t)\}$ is $S$-strongly connected. Then for some constants $\delta$ and $\lambda$  satisfying $\delta\ge\frac{1}{n^{nS}}$ and  $\lambda\le \left(1-\frac{1}{n^{nS}}\right)^{1/S}$ for all $i\in[n]$ we have
\begin{align*}
 &\left\|\zb_i(t+1)-\bar{\zx}(t) \right\| \cr
 &\le \frac{8}{\delta}\left(\lambda^t\sum_{j=1}^{n}\|\zx_j(0)\|_1+\sum_{s=1}^{t}\lambda^{t-s}\sum_{j=1}^{n}\|\ze_j(s)\|_1\right).
\end{align*}

Moreover, if $\{a(t)\}$ is a non-increasing positive scalar sequence with \newline $\sum_{t=1}^{\infty}a(t)\|\ze_i(t)\|_1<\infty$ a.s. for all $i\in [n],$ then
\begin{align*}
 &\sum_{t=0}^{\infty}a(t+1)\left\|\zb_i(t+1)-\bar{\zx}(t) \right\|\cr
 &\le\sum_{t=0}^{\infty}\frac{8a(t+1)}{\delta}\left(\lambda^t\sum_{j=1}^{n}\|\zx_j(0)\|_1+\sum_{s=1}^{t}\lambda^{t-s}\sum_{j=1}^{n}\|\ze_j(s)\|_1\right)\cr
 &\qquad\qquad\qquad\qquad\qquad\qquad<\infty \mbox{ almost surely for all $i$,}
\end{align*}
 where $\|\cdot\|_1$ is the $l^1$-norm in $\mathbb{R}^d$.
\end{theorem}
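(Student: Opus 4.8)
The plan is to recast the push-sum recursion~\eqref{eq:gps} in matrix form and then exploit the geometric ergodicity of backward products of column-stochastic matrices. For each $t$, define $A(t)\in\R^{n\times n}$ by $[A(t)]_{ij}=1/d_j(t)$ if $j\in N^{in}_i(t)$ and $[A(t)]_{ij}=0$ otherwise. Since $\sum_{i\in N^{out}_j(t)}1/d_j(t)=1$, each $A(t)$ is column-stochastic. Stacking the iterates into $\zx(t)$ and writing $y(t)=(y_1(t),\dots,y_n(t))$, the updates become $\boldsymbol w(t+1)=A(t)\zx(t)$, $y(t+1)=A(t)y(t)$, and $\zx(t+1)=A(t)\zx(t)+\ze(t+1)$. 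Column-stochasticity yields the two conservation laws $\sum_i y_i(t)=n$ (using $y_i(0)=1$) and $\sum_i\zx_i(t)=\sum_i\zx_i(0)+\sum_{s=1}^t\sum_i\ze_i(s)$, so that $\bar{\zx}(t)=\frac1n\big(\sum_j\zx_j(0)+\sum_{s=1}^t\sum_j\ze_j(s)\big)$.

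Next I would unroll the recursion. With the backward product $\Phi(t,s)=A(t)A(t-1)\cdots A(s)$ (and $\Phi(t,t+1)=I$), one obtains $y_i(t+1)=\sum_j[\Phi(t,0)]_{ij}$ and $\boldsymbol w_i(t+1)=\sum_j[\Phi(t,0)]_{ij}\zx_j(0)+\sum_{s=1}^t\sum_j[\Phi(t,s)]_{ij}\ze_j(s)$. Substituting these into $\zb_i(t+1)=\boldsymbol w_i(t+1)/y_i(t+1)$ and subtracting the formula for $\bar{\zx}(t)$, the factors $y_i(t+1)/n$ can be inserted to write
\begin{align*}
\zb_i(t+1)-\bar{\zx}(t)=\frac{1}{y_i(t+1)}\Bigg[&\sum_j\Big([\Phi(t,0)]_{ij}-\tfrac{y_i(t+1)}{n}\Big)\zx_j(0)\\
&+\sum_{s=1}^t\sum_j\Big([\Phi(t,s)]_{ij}-\tfrac{y_i(t+1)}{n}\Big)\ze_j(s)\Bigg].
\end{align*}
This identity isolates exactly the entrywise quantities whose decay drives consensus.

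The two ingredients I would then invoke are: (i) a uniform lower bound $y_i(t)\ge\delta\ge n^{-nS}$, and (ii) the geometric ergodicity estimate $\big|[\Phi(t,s)]_{ij}-\frac{y_i(t+1)}{n}\big|\le C\lambda^{t-s}$ with $\lambda\le(1-n^{-nS})^{1/S}$. Both follow from the $S$-strong connectivity of $\{G(t)\}$ together with the out-degree weighting; (ii) rests on the fact that under these hypotheses all columns of $\Phi(t,s)$ collapse to a common probability vector at a geometric rate, combined with $y(t+1)=\Phi(t,s)y(s)$ and $\sum_k y_k(s)=n$ to identify that common vector with $y(t+1)/n$. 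This ergodicity estimate is the technical heart of the argument and the main obstacle; it is precisely the content established in~\cite{A1}, in the spirit of the Nedi\'c--Olshevsky analysis of column-stochastic mixing. Feeding (i)--(ii) into the identity above, using the triangle inequality and $\|\cdot\|\le\|\cdot\|_1$, produces the first claimed bound with the factor $8/\delta$ (the constant $8$ being the one carried by the ergodicity estimate).

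Finally, for the summability statement I would multiply the first bound by $a(t+1)$ and sum over $t$. The initial-condition term contributes $\sum_t a(t+1)\lambda^t<\infty$, convergent because $a$ is bounded and $\lambda<1$. For the perturbation term I would interchange the order of summation and exploit that $a$ is non-increasing, so $a(t+1)\le a(s)$ for $t\ge s$:
\begin{align*}
\sum_{t=0}^\infty a(t+1)\sum_{s=1}^t\lambda^{t-s}\|\ze_j(s)\|_1
&=\sum_{s=1}^\infty\|\ze_j(s)\|_1\sum_{t=s}^\infty a(t+1)\lambda^{t-s}\\
&\le\frac{1}{1-\lambda}\sum_{s=1}^\infty a(s)\|\ze_j(s)\|_1,
\end{align*}
which is finite almost surely by the hypothesis $\sum_{t}a(t)\|\ze_i(t)\|_1<\infty$ a.s. Since every manipulation is pathwise, the conclusion holds almost surely for all $i$.
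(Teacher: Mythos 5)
The paper never proves this statement itself: Theorem~\ref{th:th2} is imported verbatim from~\cite{A1}, so the only proof to compare against is the one in that reference. Your reconstruction follows exactly that argument -- the column-stochastic matrix formulation with backward products $\Phi(t,s)$, the decomposition of $\zb_i(t+1)-\bar{\zx}(t)$ into initial-condition and perturbation terms weighted by $[\Phi(t,s)]_{ij}-y_i(t+1)/n$, the lower bound $y_i(t)\ge\delta$ and geometric ergodicity estimate (which you correctly flag as the technical core and defer to~\cite{A1}, just as this paper does for the whole theorem), and the non-increasing-step-size summation interchange for the second claim, which you carry out completely and correctly.
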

\smallskip

Note that the above theorem implies that, if $\lim_{t\to\infty}\|\ze(t)\|_1=0$ a.s., then 
\begin{align*}
 \lim_{t\to\infty}\left\|\zb_i(t+1)-\bar{\zx}(t) \right\|=0 \quad \mbox{ a.s. for all }i.
\end{align*}

In other words, under some assumptions on the perturbations $\ze_i(t)$ one can guarantee that in the push-sum algorithm all $\zb_i(t)$ track the average state $\bar{\zx}(t)$.

{Similar to~\cite{A1}, we adopt the push-sum algorithm~\eqref{eq:gps} to the distributed optimization problem~\eqref{eq:DOP} by letting $\ze_i(t+1) = -a(t+1)\fb_i(\zb_i(t+1))$ or $\ze_i(t+1) ~= -a(t+1)(\fb_i(\zb_i(t+1))+\Wb_i(t+1))$, where $\Wb_i(t+1)$ is a random vector whose entries are independent random variables with zero mean and bounded variance.}

Under Assumption~\ref{assum:A1} and given that $\lim_{t\to\infty}a(t)=0$, in both cases
\begin{align*}
 \lim_{t\to\infty}\|\ze_i(t)\|_1=0 \quad \mbox{ a.s. for all }i.
\end{align*}
Thus, in long run, the nodes' variables $\zb_i(t+1)$ will track the average state $\bar{\zx}(t) = \frac{1}{n}\sum_{j=1}^{n}\zx_j(t)$ (see Theorem~\ref{th:th2}).
Hence, one can expect that the long run behavior of the iterations in~\eqref{eq:gps} with $\ze_i(t+1) = -a(t+1)\fb_i(\zb_i(t+1))$ is the same as the behavior of the gradient descent iteration:
\begin{align*}
\bar{\zx}(t+1)=\bar{\zx}(t)-a(t+1)\frac1n\sum_{i=1}^{n}\fb_i(\bar{\zx}(t)),
\end{align*}
whereas the long run behavior of the iterations in~\eqref{eq:gps} with $\ze_i(t+1) = -a(t+1)(\fb_i(\zb_i(t+1))+\Wb_i(t+1))$ is equivalent to the behavior of the Robbins-Monro iteration~\cite{RMP}:
\begin{align*}
\bar{\zx}(t+1)=\bar{\zx}(t)-a(t+1)\left(\frac1n\sum_{i=1}^{n}\fb_i(\bar{\zx}(t))+\bar{\Wb}(t+1)\right).
\end{align*}

\section{Main Results}\label{sec:mainresults}

Here, we discuss the main results of this work. We present the proofs of these results in the subsequent sections.

\subsection{Deterministic Procedure: Convergence to Critical Points}\label{subsec:detpr}
First, let us recall the distributed optimization algorithm using the push-sum algorithm. At every moment of time $t\in \Z^+$ each node $i$ maintains   vector   variables $\zb_i(t)$, $\zx_i(t)$, $\boldsymbol w_i(t)\in \mathbb{R}^d$, as  well  as  a  scalar  variable $y_i(t)$: $y_i(0)=1$ for all $i\in[n]$. These quantities are updated according to \eqref{eq:gps} and
\begin{align}\label{eq:ps}
\zx_i(t+1)&=\boldsymbol w_i(t+1)-a(t+1)\fb_i(\zb_i(t+1)).
\end{align}

The process above is a special case of the \emph{perturbed push-sum} algorithm, whose background and properties are discussed in Section~\ref{subsec:pertps}.
According to~\eqref{eq:ps}, the average state $\bar{\zx}(t)$ follows the dynamics 
\begin{align}\label{eq:runavdp}
\bar{\zx}(t+1)&=\bar{\zx}(t)-a(t+1)\frac1n\sum_{i=1}^{n}\fb_i({\zb_i(t+1)}),
\end{align}
which can be rewritten as
\begin{align}\label{eq:runavdp1}
&\bar{\zx}(t+1)=\bar{\zx}(t)\\\nonumber
&\quad-a(t+1)\left(\fb(\bar{\zx}(t))+\left[\frac1n\sum_{i=1}^{n}\fb_i({\zb_i(t+1)})-\fb(\bar{\zx}(t))\right]\right),
\end{align}
where $\fb(\zb)=\frac1n\sum_{i=1}^{n}\fb_i(\zb)=\frac1n\nabla F(\zb)$.
If we denote $\frac1n\sum_{i=1}^{n}\fb_i({\zb_i(t+1)})-\fb(\bar{\zx}(t))$ by $\zq(t,\bar{\zx}(t))$, the process~\eqref{eq:runavdp1} becomes a deterministic version of the process~\eqref{eq:basic} ($\Wb(t)\equiv \zz$ for any $t$).
It is shown in~\cite{A1} that if functions $F_i(\zb_i)$ are convex functions satisfying Assumption~\ref{assum:A1}, and there exists a solution of~\eqref{eq:DOP}, then the process~\eqref{eq:runavdp} converges to an optimizer of the function $F$, given an appropriate choice of step-size sequence $a(t)$.

Let the gradients $\fb_i(\zx)$, $i\in [n]$, satisfy the following assumption:
\smallskip

\begin{assumption}\label{assum:A2}
For each $i\in [d]$, $\fb_i$ is Lipschitz continuous on $\mathbb R^d$, i.e. there exists a constant $l_i\ge 0$ such that $\|\fb_i(\zx_1)-\fb_i(\zx_2)\|\le l_i\|\zx_1-\zx_2\|$ for any $\zx_1,\zx_2\in\mathbb R^d$.
\end{assumption}
\smallskip

Further we need the following assumption on the behavior of the objective function $F(\zb)$, when $\|\zb\|\to\infty$.

\begin{assumption}\label{assum:A3}
 $F(\zb)$ is coercive, i.e $\lim_{\|\zb\|\to\infty}F(\zb)\to\infty$\footnote{Since we also require the boundedness of $\nabla F$ (Assumption~\ref{assum:A1}), the function $F(\zb)$ is assumed to have a linear behavior as $\zb\to\infty$.}.
\end{assumption}

{
Let us denote the set of critical points of $F$ by $B$, i.e.
\begin{align*}
B = \{\zb\in \mathbb R^d: \fb(\zb)=\zz\}.
\end{align*}
We use $B^{''}$ to represent its refinement to the set of local minima, i.e.
\begin{align*}
B^{''} = \{\zb\in B: \mbox{ there exists } &\delta: \fb(\zb)\le \fb(\zb'), \cr
&\mbox{ for any } \zb': \|\zb'-\zb\|<\delta\},
\end{align*}
and represent the rest of the critical points by $B^{'}=B\setminus B^{''}$.
}
Now we are ready to formulate the main result of this section.
\smallskip

\begin{theorem}\label{th:th3}
 Let the functions $F$ and $\fb_i$, $i\in[n]$, satisfy Assumptions~\ref{assum:A1}-\ref{assum:A3}. Let $\{a(t)\}$ be a positive and non-increasing step-size sequence such that $\sum_{t=0}^{\infty}a(t)=\infty$ and $\sum_{t=0}^{\infty}a^2(t)<\infty$.
 { Then the average state $\bar{\zx}(t)$ and $\zb_i(t)$ (see~\eqref{eq:runavdp1}) for the dynamics~\eqref{eq:ps} with $S$-strongly connected graph sequence $\{G(t)\}$ converge either to a point from the set $B$ or to the boundary of one of its connected components.}
\end{theorem}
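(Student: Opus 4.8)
The plan is to verify that the averaged dynamics~\eqref{eq:runavdp1} fits the template of the stochastic approximation process~\eqref{eq:basic} in its deterministic form ($\Wb(t)\equiv\zz$), and then to check the four hypotheses of Theorem~\ref{th:th1} one by one. The key structural observation is that the average state satisfies $\bar{\zx}(t+1)=\bar{\zx}(t)-a(t+1)(\fb(\bar{\zx}(t))+\zq(t,\bar{\zx}(t)))$ with $\zq(t,\bar{\zx}(t))=\frac1n\sum_{i=1}^n\fb_i(\zb_i(t+1))-\fb(\bar{\zx}(t))$, so the zero set of the drift is exactly $B=\{\zb:\fb(\zb)=\zz\}$, the critical set, which by assumption is a union of finitely many connected components. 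Once the hypotheses are in place, Theorem~\ref{th:th1} delivers convergence of $\bar{\zx}(t)$ to a point of $B$ or to the boundary of one of its connected components; convergence of each $\zb_i(t)$ to the same limit then follows from the consensus estimate of Theorem~\ref{th:th2}, since I would show $\sum_t a(t)\|\ze_i(t)\|_1<\infty$ and hence $\|\zb_i(t+1)-\bar{\zx}(t)\|\to 0$.

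First I would construct a Lyapunov function. The natural candidate is $V(t,\zx)=F(\zx)-F^\star$ (or $V(\zx)=F(\zx)$ shifted to be nonnegative), where $F^\star=\inf F$ is finite because a minimizer exists. Condition~\eqref{th1:cond1} of Theorem~\ref{th:th1} is then immediate from Assumption~\ref{assum:A3} (coercivity gives $V\to\infty$ as $\|\zx\|\to\infty$, and the function is time-independent so the infimum over $t$ is trivial). For condition~\eqref{th1:cond2} I would compute the one-step decrease $LV(t,\bar{\zx})=F(\bar{\zx}(t+1))-F(\bar{\zx}(t))$ using a descent lemma: since each $\fb_i$ is Lipschitz (Assumption~\ref{assum:A2}), $\nabla F=n\fb$ is Lipschitz with some constant $L$, so a second-order Taylor estimate gives
\begin{align*}
F(\bar{\zx}(t+1))-F(\bar{\zx}(t))\le (\nabla F(\bar{\zx}(t)),\bar{\zx}(t+1)-\bar{\zx}(t))+\tfrac{L}{2}\|\bar{\zx}(t+1)-\bar{\zx}(t)\|^2.
\end{align*}
Substituting $\bar{\zx}(t+1)-\bar{\zx}(t)=-a(t+1)(\fb(\bar{\zx}(t))+\zq(t,\bar{\zx}(t)))$ and using $\nabla F=n\fb$ produces a leading term $-n\,a(t+1)\|\fb(\bar{\zx}(t))\|^2$, which I would take as $-a(t+1)\phi(t,\zx)$ with $\phi(\zx)=n\|\fb(\zx)\|^2\in\Phi(B)$ (it is nonnegative and bounded away from zero on any annulus $U_{\epsilon,R}(B)$ by continuity of $\fb$ and compactness). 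The remaining cross and quadratic terms must be absorbed into $g(t)(1+V(t,\zx))$: the quadratic term is $O(a^2(t+1))$ by Assumption~\ref{assum:A1}, which is summable, and the cross term involves $\|\zq(t,\bar{\zx}(t))\|$, which I bound next.

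The main obstacle, as I see it, is controlling the perturbation term $\zq$ and showing that its contribution is summable against $a(t+1)$. Here the consensus error is the crux: $\|\zq(t,\bar{\zx}(t))\|\le\frac1n\sum_{i=1}^n\|\fb_i(\zb_i(t+1))-\fb_i(\bar{\zx}(t))\|\le\frac{1}{n}\sum_{i=1}^n l_i\|\zb_i(t+1)-\bar{\zx}(t)\|$ by Assumption~\ref{assum:A2}, so I need $\sum_t a(t+1)\|\zb_i(t+1)-\bar{\zx}(t)\|<\infty$. This is exactly what the second half of Theorem~\ref{th:th2} provides, \emph{provided} I can check its hypothesis $\sum_t a(t)\|\ze_i(t)\|_1<\infty$; but with $\ze_i(t+1)=-a(t+1)\fb_i(\zb_i(t+1))$ and $\|\fb_i\|\le\alpha$ (Assumption~\ref{assum:A1}), one has $a(t+1)\|\ze_i(t+1)\|_1\le \sqrt d\,\alpha\,a^2(t+1)$, which is summable since $\sum a^2(t)<\infty$. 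Hence the consensus term is summable and feeds into $\zq$. I would then set $c(t)=\frac{\max_i l_i}{n}\sum_{i}\|\zb_i(t+1)-\bar{\zx}(t)\|$ to verify the $\zq$-part of condition~\eqref{th1:cond3}, while the step-size part of~\eqref{th1:cond3} is the hypothesis on $a(t)$; condition~\eqref{th1:cond4} follows from $\|\tilde{\fb}\|\le\|\fb\|+\|\zq\|$ being bounded via Assumption~\ref{assum:A1} and the boundedness of the consensus error. The delicate point throughout is that the cross term in the descent inequality is not pointwise sign-definite, so I must be careful to split it as a genuinely summable piece (scaling like $a(t+1)\cdot a(t+1)\|\zb_i-\bar{\zx}\|$, controlled as above) plus a piece absorbed into the $-a(t+1)\phi$ drift, rather than allowing it to spoil the Lyapunov decrease; getting this bookkeeping right, so that everything lands cleanly in the form required by condition~\eqref{th1:cond2}, is where the real work lies.
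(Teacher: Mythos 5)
Your proposal is correct and takes essentially the same route as the paper: verify Theorem~\ref{th:th1} for the averaged dynamics with the Lyapunov function $V=F+\mathrm{const}$, control $\zq(t,\bar{\zx}(t))$ through Assumption~\ref{assum:A2} and the second half of Theorem~\ref{th:th2} (whose hypothesis $\sum_t a(t)\|\ze_i(t)\|_1<\infty$ you check exactly as the paper's Lemma~\ref{lem:lem1} does, from $\|\fb_i\|\le\alpha$ and $\sum_t a^2(t)<\infty$), and then transfer the conclusion to each $\zb_i(t)$ via the consensus estimate. The only cosmetic difference is that you use the descent lemma for the one-step decrease where the paper uses the mean-value theorem plus Lipschitz continuity of $\nabla V$; both produce the same $O(a^2(t+1))$ remainder absorbed into $g(t)(1+V)$, so nothing substantive changes.
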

\smallskip

Note that the deterministic process~\eqref{eq:runavdp1} suffers from the inherent property of gradient-based optimization methods as the derivative cannot distinguish between various types of critical points.

\subsection{Perturbed Procedure: Convergence to Local Minima}\label{subsec:pertpr}
In this subsection, we overcome the weakness mentioned at the end of the previous subsection by modifying the deterministic process~\eqref{eq:runavdp1}. We add some noise to the iterative process that will render it as a Markov chain in $\mathbb{R}^d$. { The idea of adding a noise to the local optimization step is similar
to one of simulated annealing ~\cite{bertsimas1993}. Here, the idea is that the non-local minima points have unstable directions (with respect to the gradient descent dynamics) that can be explored using the additional noise and hence, the algorithm will never get stuck at the non-local minima critical points.} {We will show that this noise together with an appropriate choice of the step-size sequence $a(t)$ allows the algorithm to converge to a local optimum and not to become stuck in a suboptimal critical point.} Let us consider the following variation of~\eqref{eq:ps}:
\begin{align}\label{eq:pps}
\boldsymbol w_i(t+1)&=\sum_{j\in N^{in}_i(t)}\frac{\zx_j(t)}{d_j(t)},\cr
y_i(t+1)&=\sum_{j\in N^{in}_i(t)}\frac{y_j(t)}{d_j(t)},\cr
\zb_i(t+1)&=\frac{\boldsymbol w_i(t+1)}{y_i(t+1)},\cr
\zx_i(t+1)&=\boldsymbol w_i(t+1)\!-a(t+1)\fb_i(\zb_i(t+1))\cr
&\qquad\qquad\quad-a(t+1)\Wb_i(t+1),
\end{align}
where the functions $\fb_i$ are the subgradient functions and $\{\Wb_i(t)\}_t$ is the sequence of independently and identically distributed (i.i.d.) random vectors taking values in $\mathbb{R}^d$ and satisfying the following assumption:
\smallskip

\begin{assumption}\label{assum:A4}
  The entries $W_i^k(t)$, $W_i^j(t)$ of each $\Wb_i(t)$, $i\in[n]$, are independent, $\E(W_i^k(t))=0$, and $\Var(W_i^k(t))=1$ for all $t\in \Z^+$ and $k,j=1,\ldots, d$.
\end{assumption}
\smallskip

The procedure above implies the following update for the average state:
\begin{align}\label{eq:runavpp}
&\bar{\zx}(t+1)=\bar{\zx}(t)\\\nonumber
&\qquad-a(t+1)\left(\fb(\bar{\zx}(t))+\left[\frac1n\sum_{i=1}^{n}\fb_i({\zb_i(t+1)})-\fb(\bar{\zx}(t))\right]\right)\cr
&\qquad-a(t+1)\bar{\Wb}(t+1).
\end{align}

Under Assumption~\ref{assum:A4}, the process~\eqref{eq:runavpp} is a Markov chain on the space $\mathbb{R}^d$ that is a special case of the process~\eqref{eq:basic}.
\smallskip

\begin{theorem}\label{th:th4}
Let Assumptions~\ref{assum:A1}-\ref{assum:A4} hold. Let $\{a(t)\}$ be a positive and non-increasing step-size sequence with $\sum_{t=0}^{\infty}a(t)=\infty,$ $\sum_{t=0}^{\infty}a^2(t)<\infty$.
   { Then the average state $\bar{\zx}(t)$ and $\zb_i(t)$ defined by~\eqref{eq:pps} and~\eqref{eq:runavpp} with $S$-strongly connected graph sequence $\{G(t)\}$ converge either to a point from the set $B$ or to the boundary of one of its connected components.}
\end{theorem}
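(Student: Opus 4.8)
\textbf{Proof proposal for Theorem~\ref{th:th4}.}

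The plan is to verify that the averaged dynamics~\eqref{eq:runavpp} fits the template of the stochastic recursive approximation procedure~\eqref{eq:basic} and then invoke Theorem~\ref{th:th1}. First I would identify the ingredients: set $\fb(\bar\zx) = \frac1n\nabla F(\bar\zx)$, let $\zq(t,\bar\zx(t)) = \frac1n\sum_{i=1}^n\fb_i(\zb_i(t+1)) - \fb(\bar\zx(t))$ be the consensus-error term, and let $\Wb(t+1) = \bar\Wb(t+1) = \frac1n\sum_{i=1}^n\Wb_i(t+1)$ be the noise. Under Assumption~\ref{assum:A4} the entries of $\bar\Wb(t+1)$ have zero mean and variance $1/n$, so $\Wb(t)$ has zero mean and bounded variance, matching the hypothesis on the random term in Theorem~\ref{th:th1}. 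The critical-point set $B=\{\zb:\fb(\zb)=\zz\}$ is by the standing assumption a union of finitely many connected components, which supplies the structural requirement on $B$.

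Next I would construct the Lyapunov function. The natural choice, following the gradient-descent intuition behind the algorithm, is $V(t,\bar\zx) = F(\bar\zx) - F^*$ where $F^* = \inf_\zb F(\zb)$; by Assumption~\ref{assum:A3} (coercivity) this is nonnegative and satisfies condition~\eqref{th1:cond1}, since $F$ coercive forces $\inf_{t}V(t,\bar\zx)\to\infty$ as $\|\bar\zx\|\to\infty$. The heart of the argument is condition~\eqref{th1:cond2}: I would compute the generating operator $LV(t,\bar\zx)$ using the $L$-smoothness of $F$ (which follows from Assumption~\ref{assum:A2}, since each $\fb_i$ is Lipschitz, hence $\nabla F=\sum_i\fb_i$ is Lipschitz with constant $\sum_i l_i$). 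A second-order Taylor/descent-lemma expansion of $F$ along the update~\eqref{eq:runavpp} gives a leading term $-a(t+1)\|\nabla F(\bar\zx)\|^2/n$, which is exactly the $-a(t+1)\phi(t,\bar\zx)$ term with $\phi(\bar\zx)=\frac1n\|\nabla F(\bar\zx)\|^2$; one checks $\phi\in\Phi(B)$ because $\phi$ is continuous, vanishes precisely on $B$, and is bounded away from zero on $U_{\epsilon,R}(B)$. The remaining terms must be absorbed into $g(t)(1+V(t,\bar\zx))$ with $\sum_t g(t)<\infty$: the noise contributes an $\E\|\bar\Wb\|^2$ term of order $a^2(t+1)$ (summable by hypothesis), the curvature term from smoothness also contributes $O(a^2(t+1))$ (using boundedness of the gradients from Assumption~\ref{assum:A1}), and the consensus-error cross term $-a(t+1)(\nabla F(\bar\zx), \zq(t,\bar\zx))/n$ is bounded using $|(\nabla F(\bar\zx),\zq)|\le \alpha\,n\|\zq\|$ by Assumption~\ref{assum:A1}.

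To control $\zq$ I would apply Theorem~\ref{th:th2}: with $\ze_i(t+1)=-a(t+1)(\fb_i(\zb_i(t+1))+\Wb_i(t+1))$ and using Assumption~\ref{assum:A1} together with $a(t)\to 0$, the perturbations satisfy $\|\ze_i(t)\|_1\to0$ a.s., so $\|\zb_i(t+1)-\bar\zx(t)\|\to0$ a.s.; Lipschitz continuity of each $\fb_i$ (Assumption~\ref{assum:A2}) then gives $\|\zq(t,\bar\zx(t))\|\le\frac1n\sum_i l_i\|\zb_i(t+1)-\bar\zx(t)\|\to 0$. For the summability demanded in condition~\eqref{th1:cond3}, the second (a.s.-finite sum) conclusion of Theorem~\ref{th:th2} is the key: it yields $\sum_t a(t+1)\|\zb_i(t+1)-\bar\zx(t)\|<\infty$ a.s.\ provided $\sum_t a(t)\|\ze_i(t)\|_1<\infty$, which holds here since $\|\ze_i(t)\|_1\le a(t)(\alpha+\|\Wb_i(t)\|_1)$ and $\sum_t a^2(t)<\infty$. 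Condition~\eqref{th1:cond4} is immediate from Assumption~\ref{assum:A1} and the boundedness of $\zq$ on bounded sets. The bound $c(t)$ in~\eqref{th1:cond3} would be taken as an a.s.\ bound on $\|\zq\|$; here I expect the \textbf{main obstacle} to be the interplay between the noise and the consensus error, because $\zq$ depends on the random $\zb_i(t+1)$ and the a.s.\ bound $c(t)$ must be deterministic while still yielding $\sum_t a(t+1)c(t)<\infty$. I would resolve this by separating the deterministic gradient-consensus part (handled by the summable bound from Theorem~\ref{th:th2}) from the martingale noise part (absorbed into $\Wb$), rather than forcing everything into $\zq$. Once all four conditions are verified, Theorem~\ref{th:th1} yields convergence of $\bar\zx(t)$ to a point of $B$ or to the boundary of one of its components; convergence of each $\zb_i(t)$ to the same limit then follows from $\|\zb_i(t+1)-\bar\zx(t)\|\to0$ a.s.
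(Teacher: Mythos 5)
Your proposal is correct and follows essentially the same route as the paper's own proof: recast the averaged dynamics~\eqref{eq:runavpp} as an instance of~\eqref{eq:basic}, take $V=F+\mathrm{const}$ as the Lyapunov function, expand $LV$ by a mean-value/descent-lemma argument to extract $-a(t+1)\|\fb(\bar{\zx}(t))\|^2$ plus summable remainders, control the consensus error $\zq$ via Theorem~\ref{th:th2} and Lemma~\ref{lem:lem1}, invoke Theorem~\ref{th:th1}, and transfer the conclusion to the $\zb_i(t)$ through consensus. The one place you go beyond the paper is the treatment of the a.s.\ bound $c(t)$ on $\|\zq(t,\bar{\zx}(t))\|$: the paper simply asserts $c'(t)=M\bigl(\lambda^t+\sum_{s=1}^{t}\lambda^{t-s}a(s)\bigr)$ a.s., whereas you correctly note that the unbounded noise makes this bound random and secure the required summability $\sum_t a(t+1)c(t)<\infty$ in expectation --- a refinement of the same argument, not a different method.
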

\smallskip

The above result is an analogue of Theorem~\ref{th:th3} adapted for~\eqref{eq:pps} and ~\eqref{eq:runavpp}. It is clear that Theorem~\ref{th:th4} does not guarantee the convergence of the process~\eqref{eq:runavpp} to some local minimum of the objective function $F$. However, we will prove further the following theorem claiming that the process~\eqref{eq:runavpp} cannot converge to a critical point that is not some local minimum of the objective function $F(\zb)$, given an appropriate choice of step-size sequence $a(t)$, Assumptions~\ref{assum:A1}-\ref{assum:A4}, and
\begin{assumption}\label{assum:A5A}
 For any point $\zb'\in B'$ that is not a local minimum of $F$ there exists a symmetric positive definite matrix $C(\zb')$ such that $(\fb(\zb),C(\zb')(\zb-\zb'))\le 0$ for any $\zb\in U(\zb')$, where $U(\zb')$ is some open neighborhood of $\zb'$.
\end{assumption}
Note that if the second derivatives of the function $F$ exist, the assumption above holds for any critical point of $F$ that is a local maximum. Indeed, in this case 
\begin{align}\label{eq:hess}
 \fb(\zb) = H\{F(\zb')\}(\zb - \zb')+\delta(\|\zb - \zb'\|),
\end{align}
where $\delta(\|\zb - \zb'\|) = o(1)$ as $\zb \to \zb'$ and $H\{F(\cdot)\}$ is the Hessian matrix of $F$ at the corresponding point.

Note that Assumption~\ref{assum:A5A} does not require existing of second derivatives. For example consider the function $F(z),$ $z\in \R$ that behaves in a neighborhood of its local maximum $z=0$ as follows:
$$F(z)=\begin{cases}
         -z^2, \mbox{ if $z\le 0$},\\
            -3z^2, \mbox{ if $z > 0$}.
         \end{cases}
$$
The function above is not twice differentiable at $z=0$, but $\nabla F(z)z\le 0$ for any $z\in\R$.

\begin{theorem}\label{th:th6}
Let the objective function $F(\zb)$ and gradients $\fb_i(\zb)$, $i\in [n]$, in the distributed optimization problem~\eqref{eq:DOP} satisfy \beh{Assumptions~\ref{assum:A1}-\ref{assum:A3}, and~\ref{assum:A5A}}.
Let the sequence of the random i.i.d. vectors $\{\Wb_i(t)\}_t$, $i\in[n]$, satisfy Assumption~\ref{assum:A4}. Let $\{a(t)\}$ be a positive and non-increasing step-size sequence such that $a(t)=O\left(\frac{1}{t^{\nu}}\right)$, where $\frac{1}{2}<\nu\le 1$. Then the average state vector $\bar{\zx}(t)$ (defined by~\eqref{eq:runavpp}) and states $\zb_i(t)$ for the distributed optimization problem~\eqref{eq:pps} with $S$-strongly connected graph sequence $\{G(t)\}$ converges almost surely to a point from the set $B^{''}=B\setminus B^{'}$ of the local minima of the function $F(\zb)$ or to the boundary of one of its connected components, for any initial states $\{\zx_i(0)\}_{i\in[n]}$.
\end{theorem}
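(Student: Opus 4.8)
The plan is to build on Theorem~\ref{th:th4}, which already guarantees that under Assumptions~\ref{assum:A1}--\ref{assum:A4} the average state $\bar{\zx}(t)$ converges almost surely either to a point of the critical set $B$ or to the boundary of one of its (finitely many) connected components. To obtain the sharper conclusion that the limit lies in the local-minima set $B''=B\setminus B'$, it therefore suffices to prove that the perturbed dynamics~\eqref{eq:runavpp} avoids every non-minimizing critical point, i.e.\ that $\Pr\{\bar{\zx}(t)\to\zb'\}=0$ for each $\zb'\in B'$. Since $B'$ is covered by finitely many connected components, a union bound then yields $\Pr\{\bar{\zx}(t)\to B'\}=0$, and combining this with Theorem~\ref{th:th4} gives convergence to $B''$ (or to a component boundary). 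Finally, the consensus estimate of Theorem~\ref{th:th2}, together with $\|\ze_i(t)\|_1\to0$ a.s., transfers the conclusion from $\bar{\zx}(t)$ to each node variable $\zb_i(t)$, since $\|\zb_i(t+1)-\bar{\zx}(t)\|\to0$ almost surely.

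The heart of the argument is the avoidance of a fixed $\zb'\in B'$, which I would establish through an \emph{instability} (anti-Lyapunov) estimate in the spirit of~\cite{NH}. Using the symmetric positive definite matrix $C(\zb')$ supplied by Assumption~\ref{assum:A5A}, define on the neighborhood $U(\zb')$ the quadratic form
\begin{align*}
V(\zb)=(\zb-\zb',C(\zb')(\zb-\zb')).
\end{align*}
I would then compute the generating operator $LV$ for the Markov chain~\eqref{eq:runavpp}. Expanding $V(\bar{\zx}(t+1))-V(\bar{\zx}(t))$ and taking conditional expectation (the cross terms with the zero-mean noise $\bar{\Wb}(t+1)$ vanish), one obtains a drift term $-2a(t+1)(\fb(\bar{\zx}(t)),C(\zb')(\bar{\zx}(t)-\zb'))$, a second-order noise term of order $a(t+1)^2$, and a remainder involving the consensus error $\zq(t,\bar{\zx}(t))=\frac1n\sum_{i=1}^n\fb_i(\zb_i(t+1))-\fb(\bar{\zx}(t))$. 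The sign condition in Assumption~\ref{assum:A5A} makes the drift term nonnegative, so that $V$ does not decrease in mean; meanwhile Assumption~\ref{assum:A4} gives $\mathrm{Cov}(\bar{\Wb}(t+1))=\frac1n I$, whence the noise term contributes a strictly positive amount $\frac1n\Tr(C(\zb'))\,a(t+1)^2$ that excites $V$ in every direction. The consensus remainder is controlled by Theorem~\ref{th:th2}, which guarantees $\sum_t a(t+1)\|\zb_i(t+1)-\bar{\zx}(t)\|<\infty$ almost surely, so its cumulative contribution is finite.

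With these estimates I would invoke the instability counterpart of Theorem~\ref{th:th1} from~\cite{NH}: because $V\ge0$ vanishes only at $\zb'$ while its mean increment near $\zb'$ is bounded below by the strictly positive, non-summable excitation produced by the noise (once the summable drift and consensus corrections are absorbed), the process cannot accumulate at $\zb'$. The polynomial step-size law $a(t)=O(t^{-\nu})$ with $\tfrac12<\nu\le1$ enters precisely here: $\nu\le1$ ensures $\sum_t a(t)=\infty$ (persistent excitation that keeps pushing the iterate off the unstable point), while $\nu>\tfrac12$ ensures $\sum_t a^2(t)<\infty$ (so the martingale part converges and the second-order terms are summable), and together they let the noise dominate the shrinking step near the trap.

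The main obstacle I anticipate is the avoidance estimate itself. Assumption~\ref{assum:A5A} furnishes only a \emph{sign} condition $(\fb(\zb),C(\zb')(\zb-\zb'))\le0$ rather than a quantitative lower bound on the repulsion, so the escape cannot be driven by the deterministic drift alone and must rest on the non-degeneracy of the noise together with the non-attracting character of $\zb'$ for the mean flow. Making this rigorous---verifying the precise hypotheses of the~\cite{NH} instability theorem, and showing that the $O(a^2(t))$ noise excitation together with the step-size balance genuinely prevents $V(\bar{\zx}(t))\to0$ while simultaneously absorbing the random consensus error $\zq$---is the delicate technical core of the proof; everything else (the union bound over the finitely many components and the consensus transfer to $\zb_i(t)$) is routine given the earlier results.
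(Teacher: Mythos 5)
Your high-level skeleton coincides with the paper's: use Theorem~\ref{th:th4} to get convergence to $B$, show that each non-minimizing critical point in $B'$ is avoided almost surely, apply a union bound over the finitely many components, and transfer the conclusion to the $\zb_i(t)$ via Theorem~\ref{th:th2}. However, the core avoidance estimate---which you correctly identify as the heart of the matter---is flawed as sketched. With the time-invariant quadratic $V(\zb)=(\zb-\zb',C(\zb')(\zb-\zb'))$, the noise enters $LV$ at order $a^2(t+1)$: the excitation term you exhibit is $\frac1n\Tr(C(\zb'))a^2(t+1)$, and $\sum_t a^2(t)<\infty$ is exactly what $\nu>\frac12$ guarantees. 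So this excitation is \emph{summable}, not ``non-summable'' as you claim; the condition $\sum_t a(t)=\infty$ controls the drift term of order $a(t)$, not the variance term, and cannot be transferred to it. Moreover, even if one had a valid non-summable lower bound on $\E[LV]$ near $\zb'$, that alone would not yield $\Pr\{\bar{\zx}(t)\to\zb'\}=0$: the expectation of $V$ can remain bounded away from zero while the process converges to $\zb'$ on an event of positive probability and wanders off on the complementary event. An expectation bound of the wrong sign is simply not the mechanism by which such avoidance results are proved.

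The paper's actual argument (Theorem~\ref{th:th5}, a specialization of Theorem 5.4.1 of~\cite{NH}, fed into Lemma~\ref{lem:lem2}) uses a quite different, time-dependent function: $V(t,\zx)=-\ln\phi(t)-W\bigl(U(\zx)/\phi(t)\bigr)$, where $U(\zx)=(C\zx,\zx)$, $\phi(t)=2\Tr[C]\sum_{u\ge t}a^2(u)$ is the \emph{remaining} noise variance, and $W$ satisfies $W(y)\le\ln(1+y)+c$. This function satisfies the two hypotheses that actually force $\Pr\{\lim_t\zx(t)=\zb'\}=0$: a \emph{summable upper} bound $LV(t,\zx)\le\gamma(t)$ on the domain (a supermartingale-type condition), and the blow-up property $V(t,\zx(t))\ge-\ln(\phi(t)+U(\zx(t)))-c\to\infty$ along any trajectory with $\zx(t)\to\zb'$. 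Your quadratic satisfies neither (it tends to $0$, not $\infty$, at $\zb'$). Two further hypotheses of Theorem~\ref{th:th5} also need verification and are missing or understated in your sketch: the local linear bound $\|\fb(\zb)\|\le K\|\zb-\zb'\|$ (which follows from Assumption~\ref{assum:A2} since $\fb(\zb')=\zz$), and the weighted consensus-error condition $\sum_t a(t)q(t)/\sqrt{\sum_{k>t}a^2(k)}<\infty$. The weight $a(t)/\sqrt{\sum_{k>t}a^2(k)}$ is of order $t^{-1/2}$, which is \emph{larger} than $a(t)=O(t^{-\nu})$ for $\nu>\frac12$, so the unweighted bound $\sum_t a(t+1)\|\zb_i(t+1)-\bar{\zx}(t)\|<\infty$ you cite from Theorem~\ref{th:th2} does not suffice; the paper verifies the stronger condition by exploiting the geometric structure $q(t)\le M(\lambda^t+\sum_{s\le t}\lambda^{t-s}a(s))$ together with a series-convolution lemma from~\cite{25A1}. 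To repair your proof you would have to replace the naive quadratic-excitation argument by this construction (or an equivalent rigorous non-convergence theorem), keeping your reduction and transfer steps, which are fine.
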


\beh{
\begin{rem}
 Instead of Assumption~\ref{assum:A5A}, one can assume that $B'$ is defined as the set of points $\zb'\in\R^d$ for which there exists a symmetric positive definite matrix $C(\zb')$ such that $(\fb(\zb),C(\zb')(\zb-\zb'))\le 0$ for any $\zb\in U(\zb')$. In this case, similar to Theorem~\ref{th:th6}, we can show the almost sure convergence of the process in~\eqref{eq:runavpp} to a point from the set of critical points defined by $B^{''}=B\setminus B^{'}$ or to the boundary of one of its connected components.
\end{rem}
}

\subsection{Convergence Rate of the Perturbed Process}\label{subsec:rate}
In this subsection, we present a result on the convergence rate of the procedure~\eqref{eq:runavpp} introduced above. Recall that Theorem~\ref{th:th6} claims the almost sure convergence of this process either to a point from the set of the local minima of the function in the distributed optimization problem~\eqref{eq:DOP} or to the boundary of one of its connected components, given Assumptions~\ref{assum:A1}-\ref{assum:A4}.
To formulate the result on the convergence rate, we need the following assumption on gradients' smoothness.
\begin{assumption}\label{assum:A5}
 $\frac{\partial f^k(\boldsymbol x)}{\partial x^l}$ exists and is bounded for all $k,l=1\dots d,$ where $f^k$ is the $k$-th coordinate of the vector $\fb$.
\end{assumption}

Now we are ready to formulate the following theorem.

\begin{theorem}\label{th:th7}
{Let the objective function $F(\zb)$ have finitely many critical points, i.e. the set $B$ be finite\footnote{This assumption is made to simplify notations in the proof of this theorem. Note that this theorem can be generalized to the case of finitely many connected components in the set of critical points and local minima.}.}
Let the objective function $F(\zb)$, gradients $\fb_i(\zb)$, and $\fb=\frac{1}{n}\sum_{i=1}^n\fb_i$, $i\in [n]$, in the distributed optimization problem~\eqref{eq:DOP} satisfy \beh{Assumptions~\ref{assum:A1}-\ref{assum:A3},~\ref{assum:A5A}-\ref{assum:A5}}.
Let the sequence of the random i.i.d. vectors $\{\Wb_i(t)\}_t$, $i\in[n]$, satisfy Assumption~\ref{assum:A4} and the graph sequence $\{G(t)\}$ be $S$-strongly connected. Then there exists a constant $\alpha>0$ such that for any $0<a\leq \alpha$, the average state $\bar{\zx}(t)$ (defined by~\eqref{eq:runavpp}) and states $\zb_i(t)$ for the process~\eqref{eq:runavpp} with the choice of step-size $a(t)=\frac{a}{t}$ converge to a point in $B^{''}$ (the set of local minima of $F(\zb)$). Moreover, for any $\zx^*\in B^{''}$
\begin{align*}
 \E\{\|\bar{\zx}(t)-\zx^*\|^2\mid \lim_{s\to\infty}\bar{\zx}(s)=\zx^*\}=O\left(\frac{1}{t}\right) \mbox{ as }t\to\infty.
\end{align*}
\end{theorem}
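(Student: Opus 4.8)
The plan is to establish the $O(1/t)$ mean-square convergence rate by working locally near a given local minimum $\zx^*\in B''$ and analyzing the conditional second moment of the error $\ze(t)=\bar{\zx}(t)-\zx^*$. Since Theorem~\ref{th:th6} already guarantees almost sure convergence to $B''$, the event $\{\lim_{s\to\infty}\bar{\zx}(s)=\zx^*\}$ has positive probability, and conditioning on it localizes the dynamics: eventually $\bar{\zx}(t)$ stays inside a neighborhood $U(\zx^*)$ where the structure of $\fb$ can be exploited. First I would use Assumption~\ref{assum:A5} (boundedness of the Jacobian entries of $\fb$) together with $\fb(\zx^*)=\zz$ to write a first-order expansion $\fb(\bar{\zx}(t))=J(\zx^*)(\bar{\zx}(t)-\zx^*)+o(\|\bar{\zx}(t)-\zx^*\|)$, where $J(\zx^*)$ is the Jacobian at $\zx^*$; since $\zx^*$ is a local minimum, $J(\zx^*)$ is positive semidefinite, and I would impose (or extract from the hypotheses) a spectral-gap condition that the relevant eigenvalues are bounded below by some $\mu>0$ so that the linearized drift is contracting.

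The core computation is to expand $\E\{\|\ze(t+1)\|^2\mid \mathcal{F}_t\}$ using the averaged recursion~\eqref{eq:runavpp}. Writing $\ze(t+1)=\ze(t)-a(t+1)\fb(\bar{\zx}(t))-a(t+1)\zq(t,\bar{\zx}(t))-a(t+1)\bar{\Wb}(t+1)$ and squaring, the cross term with $\bar{\Wb}(t+1)$ vanishes in conditional expectation (zero mean, Assumption~\ref{assum:A4}), leaving a contraction term $-2a(t+1)(\ze(t),\fb(\bar{\zx}(t)))$, a noise term $a^2(t+1)\,\E\|\bar{\Wb}(t+1)\|^2$ which is $a^2(t+1)\cdot\frac{d}{n}$ by the unit-variance independence assumption, and the consensus-error term involving $\zq(t,\bar{\zx}(t))=\frac1n\sum_i\fb_i(\zb_i(t+1))-\fb(\bar{\zx}(t))$. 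With the step-size $a(t)=a/t$, the linearization gives
\begin{align*}
 \E\{\|\ze(t+1)\|^2\mid\mathcal{F}_t\}\le\left(1-\frac{2a\mu}{t+1}\right)\|\ze(t)\|^2+\frac{a^2}{(t+1)^2}\frac{d}{n}+R(t),
\end{align*}
where $R(t)$ collects the higher-order and consensus contributions. Taking expectations and iterating this scalar recursion is a standard Chung-type lemma: provided $2a\mu>1$ (which fixes the threshold $\alpha$ for the admissible step-size constant $a$), the solution satisfies $\E\|\ze(t)\|^2=O(1/t)$.

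The main obstacle will be controlling the remainder $R(t)$ and showing it does not degrade the rate. Two issues arise here. The consensus-gap term $\zq$ must be shown to decay fast enough; using Theorem~\ref{th:th2} with $\ze_i(t)=-a(t)(\fb_i(\zb_i(t))+\Wb_i(t))$, the bound on $\|\zb_i(t+1)-\bar{\zx}(t)\|$ is a geometrically weighted sum of past perturbations of size $O(a(t))=O(1/t)$, which by Assumption~\ref{assum:A2} (Lipschitz $\fb_i$) yields $\|\zq(t,\bar{\zx}(t))\|=O(1/t)$ up to a martingale fluctuation, so its contribution to $R(t)$ is $a(t+1)\cdot O(1/t)\cdot\|\ze(t)\|$ plus $O(1/t^2)$ terms that are absorbed into the $1/t^2$ noise budget. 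The subtler point is the $o(\|\ze\|)$ linearization error: near $\zx^*$ it must be dominated by the contraction, which requires shrinking the neighborhood $U(\zx^*)$ and invoking the conditioning on convergence to $\zx^*$ to guarantee entry into that neighborhood after a finite (random) time, then restarting the recursion from that time. Handling this random entry time while preserving the deterministic $O(1/t)$ bound — rather than a merely eventual rate — is where the argument requires the most care, and I would address it by combining the almost-sure localization with a uniform-in-tail version of the Chung recursion.
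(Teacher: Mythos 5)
Your outline reproduces the skeleton of the paper's argument: linearization of $\fb$ at the local minimum using Assumption~\ref{assum:A5}, a spectral-gap/step-size condition $2a\beta>1$ (this is exactly what Lemma~\ref{lem:lem4} extracts, via the Lyapunov equation of Lemma~\ref{lem:lem3}, from positive definiteness of the Hessian at a local minimum, so your ``extract from the hypotheses'' parenthesis is realized there), a Chung-type second-moment recursion with noise budget $O(1/t^2)$, and control of the consensus term $\zq(t,\bar{\zx}(t))$ through Theorem~\ref{th:th2} and the Lipschitz property. All of that matches the paper.

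However, there is a genuine gap at exactly the step you defer to the end. The quantity to be estimated is $\E\{\|\bar{\zx}(t)-\zx^*\|^2\mid\lim_{s\to\infty}\bar{\zx}(s)=\zx^*\}$, and the conditioning event lies in the tail $\sigma$-field, not in $\mathcal{F}_t$. Under the conditional measure, $\bar{\Wb}(t+1)$ is no longer a zero-mean martingale difference given $\mathcal{F}_t$ --- whether the trajectory eventually converges to $\zx^*$ is correlated with the future noise --- so your cancellation of the cross term, and hence the recursion you iterate, is not valid as written; a ``uniform-in-tail version of the Chung recursion'' names the difficulty but is not an argument. The paper's proof supplies the missing device: it introduces the trimmed process $\zxk(t)$, which agrees with the averaged dynamics inside the ball $\{\|\zx\|<\varepsilon\}$ where $(C\fb(\zx),\zx)\ge\beta(C\zx,\zx)$ holds, and outside that ball replaces $\fb(\zx)$ by $\beta\zx$ and switches the noise off, making the contraction global. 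For this auxiliary process the martingale cancellation is performed under the \emph{unconditional} law, yielding $\E\|\zxk(t)\|^2=O(1/t)$ (equation~\eqref{eq:eq4}) for every starting time $\tau$ and state $\boldsymbol{\kappa}$, so no random entry time ever appears. The conditioning is then handled not inside the recursion but at the level of distribution functions: using the events $\Theta_{u,\varepsilon'}$, $\Omega_{u,\varepsilon'}$, the Markov property, and $\Pr\{\lim_{s\to\infty}\bar{\zx}(s)=\zx^*\}>0$, the paper shows in~\eqref{eq:eq5}--\eqref{eq:eq8} that $\Pr\{\|\bar{\zx}(t)\|^2\le x,\ \lim_{s\to\infty}\bar{\zx}(s)=\zx^*\}$ and $\Pr\{\|\hat{\zx}^{u,\bar{\zx}(u)}(t)\|^2\le x\}\Pr\{\lim_{s\to\infty}\bar{\zx}(s)=\zx^*\}$ have the same asymptotics up to an arbitrarily small $\sigma$, which transfers the unconditional rate of the trimmed process to the conditional moment of the original one. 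Without this trimming-plus-distributional-comparison step, or an equivalent substitute that legitimately decouples the tail event from the noise, your proposal does not close.
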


\section{Proof of the Main Results}\label{sec:proofs}
The rest of the paper is organized to prove results described in Section~\ref{sec:mainresults}.

{
Firstly, note that under  Assumption~\ref{assum:A2} each gradient function $\fb_i(\zx)$, $i\in [n]$, is a Lipschitz function with some constant $l_i$. This allows us to formulate the following lemma which we will need to prove Theorem~\ref{th:th3} and Theorem~\ref{th:th4}.

\begin{lem}\label{lem:lem1}
Let $\{a(t)\}$ be a non-increasing sequence such that $\sum_{t=0}^{\infty}a^2(t)<\infty$. Then, there exists $c(t)$ such that the following holds for the process~\eqref{eq:runavdp1}, given Assumptions~\ref{assum:A1},~\ref{assum:A2} and a $S$-strongly connected graph sequence $\{G(t)\}$:
 \begin{align*}
 \|\zq(t,\bar{\zx}(t))\|&= \|\frac{1}{n}\sum_{i=1}^{n}\fb_i({\zb_i(t+1)})-\fb(\bar{\zx}(t))\|\le c(t),
 \end{align*}
 and
 \begin{align*}
 \sum_{t=0}^{\infty} a(t+1)c(t)<\infty.
 \end{align*}
\end{lem}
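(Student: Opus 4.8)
The plan is to reduce the bound on $\zq$ to the consensus tracking error $\|\zb_i(t+1)-\bar{\zx}(t)\|$ and then inherit the required summability directly from the second part of Theorem~\ref{th:th2}. Since $\fb(\bar{\zx}(t)) = \frac1n\sum_{i=1}^n\fb_i(\bar{\zx}(t))$ by definition of $\fb$, I would first rewrite
\[
\zq(t,\bar{\zx}(t)) = \frac1n\sum_{i=1}^n\bigl(\fb_i(\zb_i(t+1))-\fb_i(\bar{\zx}(t))\bigr).
\]
Applying the triangle inequality together with the Lipschitz property of each $\fb_i$ (Assumption~\ref{assum:A2}) then yields
\[
\|\zq(t,\bar{\zx}(t))\| \le \frac1n\sum_{i=1}^n l_i\,\|\zb_i(t+1)-\bar{\zx}(t)\| =: c(t),
\]
which gives the first claimed inequality directly from the definition of $c(t)$. (One could equally majorize by $\bar l\,\max_i\|\zb_i(t+1)-\bar{\zx}(t)\|$ with $\bar l=\frac1n\sum_i l_i$; either choice works.)

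It remains to prove $\sum_{t=0}^\infty a(t+1)c(t)<\infty$, which is where Theorem~\ref{th:th2} does the heavy lifting. Recall that for the dynamics~\eqref{eq:ps} the perturbation is $\ze_i(t)=-a(t)\fb_i(\zb_i(t))$. By Assumption~\ref{assum:A1} and the equivalence of the $\ell^1$ and $\ell^2$ norms on $\R^d$, one has $\|\ze_i(t)\|_1 = a(t)\|\fb_i(\zb_i(t))\|_1\le \sqrt d\,\alpha\,a(t)$, and therefore
\[
\sum_{t=1}^\infty a(t)\|\ze_i(t)\|_1 \le \sqrt d\,\alpha\sum_{t=1}^\infty a^2(t) <\infty
\]
for every $i$, using the hypothesis $\sum_t a^2(t)<\infty$. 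This is exactly the summability condition required in the second statement of Theorem~\ref{th:th2}, whose conclusion gives $\sum_{t=0}^\infty a(t+1)\|\zb_i(t+1)-\bar{\zx}(t)\|<\infty$ for each $i$. Summing the defining expression for $c(t)$ then yields
\[
\sum_{t=0}^\infty a(t+1)c(t) = \frac1n\sum_{i=1}^n l_i\sum_{t=0}^\infty a(t+1)\|\zb_i(t+1)-\bar{\zx}(t)\| <\infty,
\]
as desired.

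The only genuine difficulty is thus confined to Theorem~\ref{th:th2}; if one instead insists on using only its first (non-summed) bound, the crux becomes showing that the geometric convolution $\sum_{t}a(t+1)\sum_{s=1}^t\lambda^{t-s}a(s)$ is finite. I would handle this by Tonelli's theorem (interchanging the order of summation) and then exploiting monotonicity of $a$: for $t\ge s$ one has $a(t+1)\le a(s)$, so
\[
\sum_{s=1}^\infty a(s)\sum_{t=s}^\infty \lambda^{t-s}a(t+1) \le \sum_{s=1}^\infty a^2(s)\sum_{t=s}^\infty\lambda^{t-s} = \frac1{1-\lambda}\sum_{s=1}^\infty a^2(s)<\infty,
\]
with the initial-condition term $\sum_t a(t+1)\lambda^t$ being trivially summable because $a$ is bounded (by its initial value) and $\lambda<1$. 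Either way, the main obstacle---controlling the accumulated disagreement between the local iterates $\zb_i$ and the running average $\bar{\zx}$---is precisely the content already packaged in Theorem~\ref{th:th2}, and the remainder of Lemma~\ref{lem:lem1} is bookkeeping resting on the uniform gradient bound and $\sum_t a^2(t)<\infty$.
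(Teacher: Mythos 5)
Your proof is correct and follows essentially the same route as the paper: reduce $\|\zq(t,\bar{\zx}(t))\|$ to the consensus tracking error via Assumption~\ref{assum:A2}, verify the hypothesis $\sum_t a(t)\|\ze_i(t)\|_1<\infty$ of Theorem~\ref{th:th2} using Assumption~\ref{assum:A1} and $\sum_t a^2(t)<\infty$, and invoke the summed bound of Theorem~\ref{th:th2} to conclude; the only cosmetic difference is that the paper takes $c(t)$ to be the explicit geometric majorant $\frac{8l}{\delta}\bigl(\lambda^t\sum_j\|\zx_j(0)\|_1+\sum_{s=1}^t\lambda^{t-s}\sum_j\|\ze_j(s)\|_1\bigr)$ rather than the tracking-error sum itself. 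Your treatment is in fact slightly more careful than the paper's (the explicit $\sqrt d$ factor from the $\ell^1$/$\ell^2$ conversion, and the self-contained Tonelli argument for the convolution, which the paper leaves packaged inside Theorem~\ref{th:th2}).
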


\begin{proof}
Since the functions $\fb_i$ are Lipschitz (Assumption~\ref{assum:A2}) and taking into account Theorem~\ref{th:th2}, we have that
\begin{align*}
\|\sum_{i=1}^{n}&\fb_i({\zb_i(t+1)})-\fb(\bar{\zx}(t))\|\le\sum_{i=1}^{n}\|\fb_i({\zb_i(t+1)})-\fb_i(\bar{\zx}(t))\|\cr
                                                              &\le \sum_{i=1}^{n} l_i\|\zb_i(t+1)-\bar{\zx}(t)\|\cr
                                                              &\le \frac{8ln}{\delta}\left(\lambda^t\sum_{j=1}^{n}\|\zx_j(0)\|_1+\sum_{s=1}^{t}\lambda^{t-s}\sum_{j=1}^{n}\|\ze_j(s)\|_1\right)
\end{align*}
for some positive $\delta$ and $\lambda$ and where $l=\max_{i\in[n]}{l_i}$, $\|\ze_i(t)\|=a(t)\|\fb_i(\zb_i(t))\|$.

Let 
$$c(t)=\frac{8l}{\delta}\left(\lambda^t\sum_{j=1}^{n}\|\zx_j(0)\|_1+\sum_{s=1}^{t}\lambda^{t-s}\sum_{j=1}^{n}\|\ze_j(s)\|_1\right).$$ 
Then,
$\|\zq(t,\bar{\zx}(t))\|\le c(t)$. Taking into account Assumption~\ref{assum:A1}, we get 
\begin{align*}
\sum_{t=1}^{\infty}a(t)\|\ze_i(t)\| = \sum_{t=0}^{\infty}a^2(t)\|\fb_i(\zb_i(t))\|\le \alpha\sum_{t=0}^{\infty}a^2(t)<\infty,
\end{align*}
Hence, according to Theorem~\ref{th:th2},
\begin{align*}
&\sum_{t=0}^{\infty}a(t+1)c(t)<\infty.
\end{align*}
\end{proof}
}
\subsection{Proof of Theorem~\ref{th:th3}}
\begin{proof}
To prove this statement we will use the general result formulated in Theorem~\ref{th:th1}. We emphasize one more time that the process~\eqref{eq:runavdp1} under consideration represents a special deterministic case of the recursive procedure~\eqref{eq:basic}. Note that, according to the choice of $a(t)$, Assumption~\ref{assum:A1}, and Lemma~\ref{lem:lem1}, the conditions~\ref{th1:cond3} and~\ref{th1:cond4} of Theorem~\ref{th:th1} hold. Thus, it suffices to show that there exists a sample function $V(t, \zx)$ of the process~\eqref{eq:runavdp1} satisfying the conditions~\ref{th1:cond1} and~\ref{th1:cond2} in Theorem~\ref{th:th1}. A natural choice for such a function is the (time-invariant) function $V(t,\zx)=V(\zx)=F(\zx)+C$, where $C$ is a constant chosen to guarantee the positiveness of $V(\zx)$ over the whole $\mathbb{R}^d$. Note that such constant always exists because of the continuity of $F$ and Assumption~\ref{assum:A3}.
Then, $V(\zx)$ is nonnegative and $V(\zx)\to\infty \quad \mbox{as} \quad \|\zx\|\to\infty$. Let $L V(\zx(t)):=LV(t,\zx)$. 
Now we show that the function $V(\zx)$ satisfies the condition~\ref{th1:cond2} in Theorem~\ref{th:th1}. 
Using the notation $\tilde{\fb}(t,\bar{\zx}(t)) = \fb(\bar{\zx}(t))+\zq(t, \bar{\zx}(t))$, $\zq(t,\bar{\zx}(t))= \frac{1}{n}\sum_{i=1}^{n}\fb_i({\zb_i(t+1)})-\fb(\bar{\zx}(t))$ and by the Mean-value Theorem:
\begin{align*}
LV(\bar{\zx}(t))& = V(\bar{\zx}(t+1)) - V(\bar{\zx}(t)) \cr
&= V(\bar{\zx}(t) - a(t+1)\tilde{\fb}(t,\bar{\zx}(t))) - V(\bar{\zx}(t))\cr
& = -a(t+1)(\nabla V(\tilde{\zx}),\tilde{\fb}(t,\bar{\zx}(t)))\cr
& = -a(t+1)(\nabla V(\bar{\zx}(t)),\tilde{\fb}(t,\bar{\zx}(t)))\cr
&\qquad-a(t+1)(\nabla V(\tilde{\zx})-\nabla V(\bar{\zx}(t)),\tilde{\fb}(t,\bar{\zx}(t))),
\end{align*}
where $\tilde{\zx}=\bar{\zx}(t)-\theta a(t+1)\tilde{\fb}(t,\bar{\zx}(t))$ for some $\theta\in (0,1)$. Taking into account Assumption~\ref{assum:A2}, we
obtain that for some constant $k>0$,
\begin{align*}
 \|\nabla V(\tilde{\zx})-\nabla V(\bar{\zx}(t))\|&\le k\|\tilde{\zx}-\bar{\zx}(t)\|\cr
 &=k a(t+1) \theta \|\tilde{\fb}(t,\bar{\zx}(t))\|.
\end{align*}

Hence, due to the Cauchy-Schwarz inequality,
\begin{align*}
LV(\bar{\zx}(t))
& \le -a(t+1)(\nabla V(\bar{\zx}(t)),\tilde{\fb}(t,\bar{\zx}(t)))\cr
&\qquad +k_1 a^2(t+1)\|\tilde{\fb}(t,\bar{\zx}(t))\|^2\cr
& \le -a(t+1)(\nabla V(\bar{\zx}(t)),\fb(\bar{\zx}(t)))\cr
&\qquad +k_2 a(t+1)\|\zq(t, \bar{\zx}(t))\| \cr
&\qquad+ k_3a^2(t+1)(1+\|\zq(t, \bar{\zx}(t))\|+\|\zq(t, \bar{\zx}(t))\|^2)
\end{align*}
for some $k_1, k_2, k_3>0$.

Recall that the function $V(\zx)$ is nonnegative. Thus, we finally obtain that
\begin{align*}
LV(\bar{\zx}(t))\le -a(t+1)(\nabla V(\bar{\zx}(t)),\fb(\bar{\zx}(t)))+ g(t)(1+V(\bar{\zx}(t))),
\end{align*}
where
\begin{align*}
g(t) =& k_4 (a(t+1)\|\zq(t, \bar{\zx}(t))\|+a^2(t+1)\cr
& +a^2(t+1)\|\zq(t, \bar{\zx}(t))\|
+a^2(t+1)\|\zq(t, \bar{\zx}(t))\|^2)
\end{align*}
for some constant $k_4>0$. Lemma~\ref{lem:lem1} and the choice of $a(t)$ imply that $g(t)>0$ and
$\sum_{t=0}^{\infty}g(t)<\infty$.
Hence,
\begin{align*}
LV(\bar{\zx}(t))\le -a(t+1)\phi(t,\bar{\zx}(t)) + g(t)(1+V(\bar{\zx}(t))),
\end{align*}
where, according to the choice of $V(\zx)$,
\begin{align*}
\phi(t,\bar{\zx}(t))=(\nabla V(\bar{\zx}(t)),\fb(\bar{\zx}(t)))=\|\fb(\bar{\zx}(t))\|^2\in\Phi(B).
\end{align*}
Thus, all conditions of Theorem~\ref{th:th1} hold and we conclude that either $\lim_{t\to\infty} \bar{\zx}(t)=\zb_0\in B$ or $ \bar{\zx}(t)$ converges to the boundary of a connected component of the set $B$. By Theorem~\ref{th:th2}, $\lim_{t\to\infty}\|\zb_i(t)-\bar{\zx}(t)\|=0$ for all $i\in [n]$ and, hence, the result follows.
\end{proof}
\smallskip

Unfortunately (and naturally), there exists no function $V(t, \zx)$ for which
\begin{align*}
\phi(t,\bar{\zx}(t))=(\nabla V(\bar{\zx}(t)),\fb(\bar{\zx}(t)))\in\Phi(B^{''}),
\end{align*}
 where $B^{''}$ represents the set of local minima of the function $F(\zb)$. Thus, the deterministic process~\eqref{eq:runavdp1} is unable to distinguish between local minima, saddle-points, and local maxima and guarantees only the convergence to a zero point of the gradient $\nabla F$.
Theorem~\ref{th:th4} and Theorem~\ref{th:th6} provide a solution on how to rectify this issue.

\subsection{Proof of Theorem~\ref{th:th4}}
\begin{proof}Assumptions~\ref{assum:A1},~\ref{assum:A2}, and~\ref{assum:A4} allow us to use Theorem~\ref{th:th2} to conclude that all $\zb_i(t+1)$ in~\eqref{eq:pps} converge to the average state $\bar{\zx}(t)$ almost surely, if $a(t)\to 0$ as $t\to\infty$.
Moreover, if $\sum_{t=0}^{\infty}a^2(t)<\infty$, Lemma~\ref{lem:lem1} holds for the process~\eqref{eq:runavpp}, where $\|\zq(t,\bar{\zx}(t))\|\le c'(t)$ a.s., $c'(t)=M \left(\lambda^t + \sum_{s=1}^{t}\lambda^{t-s}a(s)\right)$ for some positive $M$, and $\sum_{t=0}^{\infty}a(t+1)c'(t)<\infty$. Now, considering $V(\zx)=F(\zx)+C$, where $C$ is such that $V(\zx)>0$ for all $\zx$, and noticing that $LV(\bar{\zx}(t))=\E[V(\{\bar{\zx}(t+1)|\bar{\zx}(t)=\zx\})-V(\zx)]$, we can use the Mean-value Theorem for the term under the expectation (as in the proof of Theorem~\ref{th:th3}), and verify that all conditions of Theorem~\ref{th:th1} hold for the process~\eqref{eq:runavpp}, given that Assumption~\ref{assum:A3} holds and the step-size $\{a(t)\}$ is appropriately chosen.
\end{proof}

\subsection{Proof of Theorem~\ref{th:th6}}
 To prove Theorem~\ref{th:th6} we use the following result from~\cite{NH}, Chapter 5.
 We start by formulating the following important lemma (Lemma 5.4.1,~\cite{NH}) for a general Markov process $\{x(t)\}_t$ defined on some state space $E$.

\begin{lem}\label{lem:lem2}
Let $\tilde{x}$ be a point in $E$, and $D$ some bounded \beh{open domain} containing $\tilde{x}$. Assume there exists a function $V(t,x)$, bounded below for $t\in \Z^+$ and $x\in D$, such that for the process $\{x(t)\}_t$ the following conditions hold:
\begin{enumerate}[(1)]
\item \label{item:lema21}$LV(t,x)\le \gamma(t)$ for $t\in \Z^+$ and $x\in D$, where
$\sum_{t=1}^{\infty}\gamma(t)<\infty$,
\item \label{item:lema22} $\lim_{t\to\infty}V(t,x(t))=\infty$
for any sequence of points $x(t)\in E$ such that $\lim_{t\to\infty}x(t)=\tilde{x}$.
\end{enumerate}
Then $\Pr\{\lim_{t\to\infty}x(t)=\tilde{x}\}=0$ independently on the initial state $x(0)$.
\end{lem}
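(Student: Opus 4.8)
The plan is to read condition~(1) as an (approximate) supermartingale inequality for $V(t,x(t))$ along trajectories that remain in $D$, and then to contradict it with condition~(2), which forces $V(t,x(t))\to\infty$ along any trajectory approaching $\tilde x$. Let $\mathcal F_t=\sigma(x(0),\dots,x(t))$ and put $\Gamma(t)=\sum_{s=0}^{t-1}\gamma(s)$, so that $\Gamma(t)\uparrow\Gamma(\infty)=\sum_{s=0}^{\infty}\gamma(s)<\infty$ by condition~(1). Since $LV(t,x)=\E[V(t+1,x(t+1))\mid x(t)=x]-V(t,x)$, condition~(1) gives, whenever $x(t)\in D$,
\[
  \E[V(t+1,x(t+1))\mid\mathcal F_t]\le V(t,x(t))+\gamma(t),
\]
so the compensated process $Y(t):=V(t,x(t))-\Gamma(t)$ obeys $\E[Y(t+1)\mid\mathcal F_t]\le Y(t)$ as long as the trajectory stays in $D$; that is, $Y$ is a supermartingale up to the exit time from $D$.

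First I would localize. For each $k\in\Z^+$ let $\rho_k=\inf\{t\ge k:\ x(t)\notin D\}$ and consider the stopped process $Y(t\wedge\rho_k)$, $t\ge k$, which is a genuine supermartingale by optional stopping. Let $m$ be a lower bound for $V$ on $D$ (which exists by hypothesis). On the event $A_k:=\{\rho_k=\infty\}=\{x(t)\in D\ \text{for all }t\ge k\}$ we have $x(t)\in D$, hence $Y(t)\ge m-\Gamma(\infty)$, for every $t\ge k$. Since a supermartingale whose sample path stays bounded below converges almost surely to a finite limit on that event, $Y(t)$, and hence $V(t,x(t))=Y(t)+\Gamma(t)$ (recall $\Gamma(t)\to\Gamma(\infty)<\infty$), converges to a finite limit almost surely on $A_k$.

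Next I would close the argument by contradiction. Put $A=\{\lim_{t\to\infty}x(t)=\tilde x\}$. Because $D$ is an open neighborhood of $\tilde x$, on $A$ the trajectory eventually enters and stays in $D$, so $A\subseteq\bigcup_{k\ge0}A_k$; as the $A_k$ increase in $k$, we get $\Pr(A)=\lim_{k\to\infty}\Pr(A\cap A_k)$. If $\Pr(A)>0$, then $\Pr(A\cap A_k)>0$ for some $k$. On $A\cap A_k$ the previous paragraph gives $\lim_t V(t,x(t))<\infty$, whereas condition~(2), applied to the realized trajectory $x(t)\to\tilde x$, gives $\lim_t V(t,x(t))=\infty$; these are incompatible, so $\Pr(A\cap A_k)=0$ for every $k$ and therefore $\Pr(A)=0$, independently of the initial state $x(0)$.

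The step I expect to be the main obstacle is the localized supermartingale convergence asserted in the second paragraph: the stopped process $Y(t\wedge\rho_k)$ is bounded below only on $A_k$, since at the exit instant the value $V(\rho_k,x(\rho_k))$ need not be controlled ($V$ is assumed bounded below only on $D$), so one cannot invoke global $L^1$-bounded martingale convergence directly. The clean remedy is the sample-path version of the theorem — a supermartingale converges almost surely on the event $\{\inf_t Y(t)>-\infty\}$, which always contains $A_k$ here — obtained from Doob's upcrossing inequality by further stopping at the level sets $\{Y(t)\le -c\}$, controlling the negative part of the doubly-stopped process (using the integrability of $V$ along the process that is implicit in the very definition of $LV$), and letting $c\to\infty$. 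Everything else (the compensation, the optional stopping, the union over $k$, and the contradiction with condition~(2)) is routine.
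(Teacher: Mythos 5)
The paper itself never proves this lemma: it is quoted from \cite{NH} (Lemma 5.4.1) and used as an imported tool, so your proposal has to stand on its own. It does not, and the failure is exactly at the step you flag as the ``main obstacle.'' The result you invoke there --- that a supermartingale converges almost surely to a finite limit on the event $\{\inf_t Y(t)>-\infty\}$ --- is false. Take $Y(0)=0$ and independent increments, with $Y(t+1)-Y(t)$ equal to $+1$ with probability $1-2^{-t-2}$ and equal to $-2^{t+2}$ with probability $2^{-t-2}$. Each increment has nonpositive mean, so $Y$ is a supermartingale; yet on the event that no negative increment ever occurs, which has probability $\prod_{t\ge 0}(1-2^{-t-2})>0$, one has $Y(t)=t\to+\infty$ while $Y(t)\ge 0$ for all $t$. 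So pathwise boundedness below on an event does not prevent divergence to $+\infty$ on that event. Doob's theorem needs the global $L^1$ bound $\sup_t \E[(Y(t\wedge\rho_k))^-]<\infty$, and this is precisely what is unavailable here: at the exit time $\rho_k$ the stopped process takes the value $V(\rho_k,x(\rho_k))$ at a point \emph{outside} $D$, where the hypotheses impose no lower bound on $V$ at all. Your proposed repairs do not close this gap: additional stopping at $\{Y\le -c\}$ changes nothing on $A_k$ (there the path never drops below $m-\Gamma(\infty)$, and the uncontrolled overshoot at $\rho_k$ persists on the complement), and the ``integrability implicit in the definition of $LV$'' gives only almost-sure finiteness of conditional expectations given each state, not a uniform-in-$t$ unconditional bound on $\E[(Y(t\wedge\rho_k))^-]$.

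This is not a repairable technicality of your write-up; it is the crux of the lemma. Divergence of $V(t,x(t))$ to $+\infty$ along a trajectory that stays in $D$ is exactly what condition (2) produces on $\{x(t)\to\tilde{x}\}$, so it is the one scenario the proof must exclude, and the supermartingale inequality inside $D$ alone cannot exclude it. Indeed, the mechanism of the counterexample can be built into the Markov process itself: give the chain a summable probability $p_t$ of jumping from $D$ to a region where $V$ is of order $-1/p_t$; then $LV(t,x)\le 0$ holds for every $x\in D$ even though $V(t,x(t))$ increases to $+\infty$ along every trajectory that never leaves $D$, and such trajectories occur with positive probability. A correct proof therefore must use some control of $V$ from below off $D$ --- for instance, that $V$ is bounded below on the whole state space, or at least on the set of points reachable in one step from $D$. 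In the application the paper makes of this lemma (the proof of Theorem 5) this is automatic, since there $V(t,\zx)$ decays only logarithmically in $\|\zx\|$ while the one-step noise has bounded variance, so the compensated stopped process does have $L^1$-bounded negative part and Doob's theorem yields a finite limit on $A_k$, after which your contradiction with condition (2) and the union over $k$ go through verbatim. With such a hypothesis made explicit your outline becomes a proof; without it, the key convergence claim you rely on is simply not a theorem.
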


This lemma is used to prove the following theorem that is a special case of Theorem 5.4.1 in~\cite{NH}.
\smallskip

\begin{theorem}\label{th:th5}
Let $\{x(t)\}_t$ be a Markov process on $\mathbb{R}^d$ defined by:
\begin{align*}
\zx(t+1)=\zx(t) - a(t+1) &[\fb(\zx(t))+\zq(t,x(t))] \cr
&\qquad\qquad- a(t+1)\Wb(t+1),
\end{align*}
where $\{\Wb(t)\}_t$ is a sequence of i.i.d. random vectors.
Let $H'$ be the set of the points $\zb'\in\mathbb{R}^d$ for which there exists a symmetric positive definite matrix $C=C(\zb')$  such that $(\fb(\zb),C(\zb-\zb'))\le 0$  \beh{for $\zb\in U(\zb')$, where $U(\zb')$ is some open neighborhood of $\zb'$}.
Assume that
\begin{enumerate}[(1)]
 \item\label{th5:cond1} for any $\zb^{'}\in H^{'}$ there exists positive constants $\delta=\delta(\zb')$ and $K=K(\zb')$ such that
 $\|\fb(\zb)\|\le K\|\zb - \zb'\|$ for any $\zb:$ $\|\zb - \zb'\|<\delta$,
 \item\label{th5:cond2} the random vectors $\Wb(t)$ satisfy Assumption~\ref{assum:A4},
 \item\label{th5:cond3} $\sum_{t=0}^{\infty}a^2(t)<\infty$, $\sum_{t=0}^{\infty}\left(\frac{a(t)}{\sqrt{\sum_{k=t+1}^{\infty}a^2(k)}}\right)^3<\infty$,
 \item\label{th5:cond4} $\sum_{t=0}^{\infty}\frac{a(t)q(t)}{\sqrt{\sum_{k=t+1}^{\infty}a^2(k)}}<\infty$, $q(t)=\sup_{\zx\in\mathbb{R}^d}\|\zq(t,\zx)\|$.
\end{enumerate}
Then $\Pr\{\lim_{t\to\infty}\zx(t)\in H'\}=0$ irrespective of the initial state $x(0)$.
\end{theorem}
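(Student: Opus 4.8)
The plan is to apply Lemma~\ref{lem:lem2} with $\tilde x=\zb'$ separately for each point $\zb'\in H'$, and then to pass from single points to the whole set $H'$. Fix $\zb'\in H'$ together with the symmetric positive definite matrix $C=C(\zb')$ from the definition of $H'$, and let $D=U(\zb')$ be the bounded open neighbourhood on which the sign condition $(\fb(\zb),C(\zb-\zb'))\le 0$ holds. After a translation we may take $\zb'=\zz$. On $D$ we will also use the linear bound $\|\fb(\zb)\|\le K\|\zb\|$ from hypothesis~\ref{th5:cond1} to keep the deterministic drift under control near the origin. Since the sign condition and positive-definiteness are invariant under the scaling $C\mapsto cC$, $c>0$, we are free to normalise $C$ so that $\Tr C=1$; this normalisation is what makes the construction below balance.

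The core of the proof is to exhibit a function $V(t,\zx)$ satisfying the two hypotheses of Lemma~\ref{lem:lem2}. The relevant length scale near $\zb'$ is $\sqrt{B(t)}$, where $B(t)=\sum_{k=t+1}^{\infty}a^2(k)$; by hypothesis~\ref{th5:cond3} this is finite, decreases monotonically to $0$, and satisfies $B(t)-B(t+1)=a^2(t+1)$. Following the construction of~\cite{NH}, Chapter~5, I would take $V$ singular at $\zb'$ exactly at this scale, of the general type $V(t,\zx)=B(t)^{-q}\,h\!\big((C\zx,\zx)/B(t)\big)$ with $q>0$ and $h$ bounded below by a positive constant. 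Since then $V(t,\zx)\ge c\,B(t)^{-q}\to\infty$, for \emph{every} sequence $\zx(t)\to\zz$ we get $V(t,\zx(t))\to\infty$ irrespective of the rate of approach, which is precisely condition~\ref{item:lema22} of Lemma~\ref{lem:lem2}. Tying the singular scale to $B(t)$, rather than to a fixed Gaussian width, is essential: it is what also captures sequences that converge slowly to $\zb'$.

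The substance of the argument is condition~\ref{item:lema21}, namely $LV(t,\zx)\le\gamma(t)$ with $\sum_t\gamma(t)<\infty$. Abbreviating $a=a(t+1)$, $\fb=\fb(\zx(t))$, $\zq=\zq(t,\zx(t))$, and using that the noise has zero mean and, by Assumption~\ref{assum:A4}, covariance equal to the identity, a direct computation of the increment of the quadratic form $Q(\zx)=(C\zx,\zx)$ along~\eqref{eq:basic} gives $\E[Q(\zx(t+1))\mid\zx(t)]-Q(\zx(t))=-2a(\fb+\zq,C\zx(t))+a^2(C(\fb+\zq),\fb+\zq)+a^2\Tr C$. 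The isotropic noise injection $a^2\Tr C$ is balanced against the decrement $B(t)-B(t+1)=a^2$ coming from the $t$-dependence of $V$, and the leading drift $-2a(\fb,C\zx(t))\ge 0$ has the favourable sign by the defining inequality of $H'$. With the profile $h$ and the exponent $q$ chosen so that the zeroth- and second-order terms of the expansion of $V$ cancel against these contributions (i.e.\ $V$ is essentially space--time harmonic for the linearised generator), the surviving contributions are a third-order remainder controlled by $\sum_t(a(t)/\sqrt{B(t)})^3$ and a perturbation term controlled by $\sum_t a(t)q(t)/\sqrt{B(t)}$. These are finite by exactly hypotheses~\ref{th5:cond3} and~\ref{th5:cond4}, while the linear bound~\ref{th5:cond1} and the Lipschitz property (Assumption~\ref{assum:A2}) keep the $\fb$-dependent terms bounded uniformly on $D$. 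Collecting the estimates yields $LV(t,\zx)\le\gamma(t)$ with $\sum_t\gamma(t)<\infty$.

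The delicate step, which I expect to be the main obstacle, is precisely this drift estimate: one must design $h$ and $q$ (and use $\Tr C=1$) so that the positive variance contribution that the repulsion inevitably injects near $\zb'$ is cancelled or dominated up to a summable error, uniformly on the bounded domain $D$. Granting this, Lemma~\ref{lem:lem2} gives $\Pr\{\lim_{t\to\infty}\zx(t)=\zb'\}=0$ for each $\zb'\in H'$. To conclude for the whole set, I would use that $H'$ lies inside the critical set of $F$, which is assumed to be a union of finitely many connected components, and that by Theorems~\ref{th:th3}--\ref{th:th4} the process converges to a single limit point (or to the boundary of a component); covering $H'$ by countably many such neighbourhoods and applying countable subadditivity then gives $\Pr\{\lim_{t\to\infty}\zx(t)\in H'\}=0$, as claimed.
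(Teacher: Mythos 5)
Your overall strategy is the paper's: apply Lemma~\ref{lem:lem2} at each $\zb'\in H'$ with a Lyapunov function whose time-singularity is tied to the tail sum $B(t)=\sum_{k=t+1}^{\infty}a^2(k)$, and use hypotheses~\ref{th5:cond3}--\ref{th5:cond4} for the summability of $\gamma(t)$. But your concrete ansatz cannot work, and the step you defer (``granting this'') is exactly where it breaks. You insist on $V(t,\zx)=B(t)^{-q}h\bigl((C\zx,\zx)/B(t)\bigr)$ with $h$ bounded below by a positive constant $c_0$, so that $V(t,\zx)\ge c_0B(t)^{-q}\to\infty$ \emph{uniformly} in $\zx$. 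Such a uniform blow-up is incompatible with condition~\ref{item:lema21} of Lemma~\ref{lem:lem2}. Indeed, suppose $LV(t,\zx)\le\gamma(t)$ on $D$ with $\sum_t\gamma(t)<\infty$, and let $\tau$ be the exit time from $D$: the stopped process $V(t\wedge\tau,\zx(t\wedge\tau))-\sum_{s<t\wedge\tau}\gamma(s)$ is a supermartingale, so $\E[V(T\wedge\tau,\zx(T\wedge\tau))]\le V(0,\zx(0))+\sum_{s}\gamma(s)$ for all $T$, while your floor gives $\E[V(T\wedge\tau,\zx(T\wedge\tau))]\ge c_0B(T)^{-q}\Pr\{\tau>T\}$; letting $T\to\infty$ forces $\Pr\{\tau=\infty\}=0$, i.e.\ the process would have to leave $D$ almost surely. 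However, the hypotheses of Theorem~\ref{th:th5} admit $\fb\equiv\zz$ and $\zq\equiv\zz$ near $\zb'$ (then $(\fb(\zb),C(\zb-\zb'))=0$ and $\|\fb(\zb)\|\le K\|\zb-\zb'\|$ hold trivially), in which case $\zx(t)=\zx(0)-\sum_{s\le t}a(s)\Wb(s)$ is a convergent martingale that remains in $D$ forever with positive probability. So no choice of $h$ and $q$ of your form satisfies condition~\ref{item:lema21}; in particular the drift $-2a(\fb,C\zx)\ge0$ you hope will cancel the time growth may vanish identically, and even when it does not, it enters $LV$ multiplied by $h'(y)\to0$ at large $y$, while the floor's time increment $c_0[B(t+1)^{-q}-B(t)^{-q}]$ telescopes to $+\infty$. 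The paper's function is built precisely to avoid this trap: $V(t,\zx)=T(t)-W\bigl(U(\zx)/\phi(t)\bigr)$ with $U(\zx)=(C\zx,\zx)$, $\phi(t)=2\Tr[C]\sum_{u=t}^{\infty}a^2(u)$, $T(t)=-\ln\phi(t)$ and $W(y)=\ln(1+y)+O(1)$ behaves like $-\ln\bigl(\phi(t)+U(\zx)\bigr)$; it stays \emph{bounded} in $t$ at every fixed $\zx\ne\zz$ (the growth of $T(t)$ is cancelled by $-W$), yet still satisfies condition~\ref{item:lema22} because $\phi(t)+U(\zx(t))\to0$ along any sequence $\zx(t)\to\zz$, however slow. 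Your stated reason for the positive floor --- that without it slowly converging sequences are missed --- is therefore unfounded, and it is exactly the floor that kills the construction.

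Two further points. First, even granting a workable $V$, you never carry out the verification of $LV(t,\zx)\le\gamma(t)$; in the paper this Taylor expansion of $W$, which turns hypotheses~\ref{th5:cond1}--\ref{th5:cond4} into $\gamma(t)=k\bigl[a^2(t)+a(t)q(t)/\sqrt{B(t)}+(a(t)/\sqrt{B(t)})^3\bigr]$, is the substance of the proof, not a remark. Second, your passage from single points to the set $H'$ by ``countable subadditivity'' is not valid: Lemma~\ref{lem:lem2} yields $\Pr\{\lim_{t\to\infty}\zx(t)=\zb'\}=0$ for each fixed $\zb'$, and $\{\lim_{t\to\infty}\zx(t)\in H'\}$ is a union of such point events over a possibly uncountable set; covering $H'$ by countably many neighbourhoods does not convert point events into neighbourhood events (the degenerate example above, where $H'=\R^d$ and the process converges almost surely, shows the pointwise statement really can hold while the set statement fails, so some additional structure on $H'$ is indispensable). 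To be fair, the paper is also terse at this final step and defers it to Theorem 5.4.1 of \cite{NH}, but your covering argument should not be presented as a proof of it.
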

\begin{proof}
We provide here a sketch of the proof. All details can be found in~\cite{NH}.

Let $\zx'\in H'$ be any point from the set $H'$ and $C=C(\zx')$ be the matrix figuring in the definition of $H'$. Without loss of generality we may assume $\zx'=\zz$. Let
\begin{align*}
U(\zx)& =(C\zx, \zx),\cr
W(y)& = \int_{0}^{y}dv\int_{0}^{v}\frac{e^{u-v}}{\sqrt{uv}}du, \mbox{ } y\ge0, \\
\phi(t)& =2\Tr[C]\sum_{u=t}^{\infty}a^2(u),\cr
T(t)&= -\ln\phi(t), \cr
 V(t,\zx)&=T(t)-W(y),
\end{align*}
where $y=\frac{U(\zx)}{\phi(t)}$.
Then expanding $LV(t,\zx)$ at the point $y=\frac{U(\zx)}{\phi(t)}$ by Taylor's formula and using the analytical properties of the function $W(y)$ and assumptions~\ref{th5:cond1} and~\ref{th5:cond2} of the theorem, for some positive constant $k$ we have
\begin{align*}
&LV(t,\zx)\cr
&\le k \left[a^2(t)+\frac{a(t)q(t)}{\sqrt{\sum_{k=t+1}^{\infty}a^2(k)}}+\left(\frac{a(t)}{\sqrt{\sum_{k=t+1}^{\infty}a^2(k)}}\right)^3\right],
\end{align*}
for any \beh{$\zx\in D = \{\|\zx\|<\delta\}\cap U(\zz)$, where $U(\zz)$ and the constant $\delta$ are used in the definition of the set $H'$ and condition~\eqref{th5:cond1} of theorem, respectively}.

Let
\[\gamma(t):= \left[a^2(t)+\frac{a(t)q(t)}{\sqrt{\sum_{k=t+1}^{\infty}a^2(k)}}+\left(\frac{a(t)}{\sqrt{\sum_{k=t+1}^{\infty}a^2(k)}}\right)^3\right].\]
Using assumption~\ref{th5:cond3} of the theorem, we have that $\sum_{t=0}^{\infty}\gamma(t)<\infty$. Thus, condition~\ref{item:lema21} of Lemma~\ref{lem:lem2} is fulfilled for the introduced function $V(t,\zx)$ in $D$.
Now we show that $V(t,\zx)$ and the point $\zx'$ also satisfy condition~\ref{item:lema22} of Lemma~\ref{lem:lem2}. Let $\{\zx(t)\}_t$ be any sequence in $\mathbb R^d$ such that $\zx(t)\to\zz$ as $t\to\infty$.
One can check that there exists a positive constant $c$ such that $W(y)<\ln(1+y)+c$ for all $y\ge 0$ and, hence\footnote{We refer the readers to Lemma 5.3.3 in~\cite{NH} for more details about the analytical properties of $W(y)$.},
\begin{align*}
V(t,\zx)& =T(t)-W\left(\frac{U(\zx(t))}{\phi(t)}\right)\cr
		&\ge -\ln\phi(t)-\ln\left(1+\frac{U(x(t))}{\phi(t)}\right)-c\\
		& =-\ln(\phi(t)+U(\zx(t)))-c\to\infty, \quad \mbox{as } t\to\infty,
\end{align*}
since $\phi(t)\to 0$ and $U(x(t))\to U(\zz)=0$ as $t\to\infty$.
Thus, condition 2 of Lemma~\ref{lem:lem2} is also fulfilled. Moreover, it is readily seen that $V(t,x)$ is bounded below for all $t\in \Z^+$ and $\zx\in D$. Thus, the function $V(t,\zx)$ and any point $\zx'\in H'$ satisfy Lemma~\ref{lem:lem2}. Then we can conclude that $\Pr\{\lim_{t\to\infty}\zx(t)\in H'\}=0$.
\end{proof}

With this, we can prove Theorem~\ref{th:th6}.

\begin{proof}[Proof of Theorem~\ref{th:th6}]
We first notice that, under the proposed choice of $a(t)$, $\sum_{t=0}^{\infty}a(t)=\infty$ and $\sum_{t=0}^{\infty}a^2(t)<\infty$. Hence, we can use Theorem~\ref{th:th4} to conclude that the process~\eqref{eq:runavpp} converges to a point from the set $B$ represented by critical points of the function $F(\zb)$ (or to the boundary of one of connected components of $B$). As before, let the set of points that are not local minima be denoted by $B'$, $B'=B\setminus B^{''}$.
Next, we notice that, \beh{according to Assumption~\ref{assum:A5A}, for any $\zb'\in B'$ there exist some symmetric positive definite matrix $C=C(\zb')$ such that $(\fb(\zb),C(\zb-\zb'))\le 0$ for $\zb\in U(\zb')$, where $U(\zb')$ is some neighborhood of $\zb'$.} Moreover, since $B'\subset B$ and due to Assumption~\ref{assum:A2}, we can conclude that the condition~\ref{th5:cond1} of Theorem~\ref{th:th5} holds for $B'$ and $\fb$.

It is straightforward to verify that the sequence $a(t)=O\left(\frac{1}{t^{\nu}}\right)$ satisfies condition~\ref{th5:cond3} of Theorem~\ref{th:th5}.  Next, recall (see the discussion in the proof of Theorem~\ref{th:th4}) that
\begin{align*}
\|\zq(t,\bar{\zx}(t))\|&\le \frac1n\sum_{i=1}^{n} l_i\|\zb_i(t+1)-\bar{\zx}(t)\|\cr
&\le M \left(\lambda^t + \sum_{s=1}^{t}\lambda^{t-s}a(s)\right),
\end{align*}
 almost surely for some positive constant $M$. Hence,
\begin{align*}
 q(t)=\sup_{\zx\in\mathbb{R}^d}\|\zq(t,\zx)\|\le M \left(\lambda^t + \sum_{s=1}^{t}\lambda^{t-s}a(s)\right).
\end{align*}
Taking into account this and the fact that $\frac{a(t)}{\sqrt{\sum_{k=t+1}^{\infty}a^2(k)}}=O(\frac{1}{\sqrt{t}})$, we have
\begin{align*}
 &\sum_{t=0}^{\infty}\frac{a(t)q(t)}{\sqrt{\sum_{k=t+1}^{\infty}a^2(k)}}\cr
 &\qquad\le \sum_{t=0}^{\infty} O\left(\frac{1}{\sqrt {t}}\right) \left(\lambda^t + \sum_{s=1}^{t}\lambda^{t-s}O\left(\frac{1}{s^{\nu}}\right)\right)<\infty.
\end{align*}
The last inequality is due to following considerations.
\begin{align*}
 1)& \quad \sum_{t=0}^{\infty}O\left(\frac{1}{\sqrt {t}}\right)\lambda^t\le \sum_{t=0}^{\infty} O(\lambda^t)<\infty,\mbox{ since }\lambda\in(0,1).\\
 2)& \quad\sum_{t=0}^{\infty}O\left(\frac{1}{\sqrt {t}}\right)\left(\sum_{s=1}^{t}\lambda^{t-s}O\left(\frac{1}{s^{\nu}}\right)\right)\cr
 &\qquad\qquad\le \sum_{t=0}^{\infty}\sum_{s=1}^{t}\lambda^{t-s}O\left(\frac{1}{s^{\nu+1/2}}\right)<\infty,
\end{align*}
since, according to~\cite{25A1}, any series of the type $\sum_{k=0}^{\infty}\left(\sum_{l=0}^{k}\beta^{k-l}\gamma_l\right)$ converges, if $\gamma_k\ge 0$ for all $k\ge0$, $\sum_{k=0}^{\infty}\gamma_k<\infty$, and $\beta\in(0,1)$.

Thus, all conditions of Theorem~\ref{th:th5} are fulfilled for the points from the set $B'$. It implies that the process~\eqref{eq:runavpp} cannot converge to the points from the set $B^{'}$ and, thus, either $\Pr\{\lim_{t\to\infty}\bar{\zx}(t)=\zb_0\in B^{''}\}=1$ or $\bar{\zx}(t)$ converges almost surely to the boundary of one of connected components of the set $B^{''}$. We conclude the proof by noting that, according to Theorem~\ref{th:th2}, $\lim_{t\to\infty}\|\zb_i(t)-\bar{\zx}(t)\|=0$ almost surely for all $i\in [n]$ and, hence, the result follows.
\end{proof}

\subsection{Proof of Theorem~\ref{th:th7}}
We start by revisiting a well-known result in linear systems theory~\cite{malkin}.

\begin{lem}\label{lem:lem3}
If some matrix $A$ is stable\footnote{A matrix is called stable (Hurwitz) if all its eigenvalue have a strictly negative real part.}, then for any symmetric positive definite matrix $D$, there exists a symmetric positive definite matrix $C$ such that $CA+A^TC=-D$. In fact, $C=\int_0^{\infty}e^{A^T\tau}De^{A\tau}d\tau$.
\end{lem}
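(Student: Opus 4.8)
The plan is to verify directly that the proposed closed form $C=\int_0^{\infty}e^{A^T\tau}De^{A\tau}\,d\tau$ is well defined and has all three claimed properties, rather than to solve the matrix equation abstractly. The starting point is the standard spectral consequence of stability: since every eigenvalue of $A$ has strictly negative real part, there exist constants $M\ge 1$ and $\beta>0$ with $\|e^{A\tau}\|\le Me^{-\beta\tau}$ for all $\tau\ge 0$. Using submultiplicativity of the operator norm, the integrand is bounded by $\|e^{A^T\tau}De^{A\tau}\|\le M^2\|D\|e^{-2\beta\tau}$, so the matrix integral converges absolutely entry by entry and $C$ is a well-defined matrix.

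Next I would establish symmetry and positive definiteness. Symmetry follows by transposing under the integral sign and using $D=D^T$, which gives $C^T=\int_0^\infty e^{A^T\tau}D^Te^{A\tau}\,d\tau=C$. For definiteness, for any $\zx\neq\zz$ I would write
\[
(\zx,C\zx)=\int_0^\infty (e^{A\tau}\zx,\,D\,e^{A\tau}\zx)\,d\tau,
\]
whose integrand is nonnegative because $D$ is positive definite, and strictly positive at $\tau=0$ since $e^{A\cdot 0}\zx=\zx\neq\zz$; continuity of the integrand then forces the integral to be strictly positive, so $C$ is positive definite.

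The heart of the argument is verifying the Lyapunov identity, and the key observation is that the integrand is an exact derivative:
\[
\frac{d}{d\tau}\bigl(e^{A^T\tau}De^{A\tau}\bigr)=A^Te^{A^T\tau}De^{A\tau}+e^{A^T\tau}De^{A\tau}A.
\]
Integrating both sides from $0$ to $\infty$ and recognizing the right-hand side as $A^TC+CA$ after interchanging the (absolutely convergent) integral with left and right multiplication by the fixed matrices $A^T$ and $A$, I obtain
\[
CA+A^TC=\Bigl[e^{A^T\tau}De^{A\tau}\Bigr]_0^\infty=\lim_{\tau\to\infty}e^{A^T\tau}De^{A\tau}-D.
\]
The boundary term at infinity vanishes by the same exponential decay bound used for convergence, leaving $CA+A^TC=-D$, as claimed.

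The computation is entirely routine; the only points requiring care are justifying the exponential bound $\|e^{A\tau}\|\le Me^{-\beta\tau}$ from stability (obtained from the Jordan canonical form of $A$) and justifying that the fundamental theorem of calculus and the interchange of integration with multiplication by constant matrices are legitimate for matrix-valued integrals, both of which follow from the absolute convergence established at the outset. I expect no genuine obstacle here, as this is the classical Lyapunov equation and the proposed $C$ is already furnished in the statement.
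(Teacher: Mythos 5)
Your proof is correct and complete: the exponential bound from stability, the symmetry and positive definiteness of the integral, and the integration of the exact derivative $\frac{d}{d\tau}\bigl(e^{A^T\tau}De^{A\tau}\bigr)$ are exactly the classical verification of the Lyapunov formula. The paper itself gives no proof of this lemma --- it is cited as a well-known result from linear systems theory (Malkin) with the explicit formula for $C$ stated --- and your argument is precisely the standard one that citation points to, so there is nothing to reconcile.
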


Recall that we deal with the objective function $F(\zb)$ that has finitely many local minima $\{\zx^*_1, \ldots, \zx^*_R\}$. We begin by noticing that according to Assumption~\ref{assum:A5} the function $\fb(\zx)$ admits the following representation in some neighborhood of any local minimum $\zx^*_m$, $m=1,\ldots, R$,
\begin{align*}
 \fb(\zx) = H\{F(\zx^*_m)\}(\zx - \zx^*_m)+\delta_m(\|\zx - \zx^*_m\|),
\end{align*}
where $\delta_m(\|\zx - \zx^*_m\|) = o(1)$ as $\zx \to \zx^*_m$ and $H\{F(\cdot)\}$ is the Hessian matrix of $F$ at the corresponding point.

For the sake of notational simplicity, let the matrix $H\{F(\zx^*_m)\}$ be denoted by $H_m$. Now we are ready to formulate the following lemma.
\begin{lem}\label{lem:lem4}
Under Assumption~\ref{assum:A5}, for any local minimum $\zx^*_m$, $m=1,\ldots, R$, of the objective function $F$ there exist a symmetric positive definite matrix $C_m$ and positive constants $\beta(m)$, $\varepsilon(m)$, and $a <\infty$ (independent on $m$), such that for any $\zx$: $\|\zx-\zx^*_m\|<\varepsilon(m)$ the following holds:
\begin{align*}
(C_m\fb(\zx),\zx-\zx^*_m)& \ge \beta(m)(C_m(\zx-\zx^*_m),\zx-\zx^*_m),\\
2a\beta(m)& >1.
\end{align*}
\end{lem}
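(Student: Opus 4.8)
The plan is to realize the desired inequality as a discrete Lyapunov estimate built from the solution of a continuous-time Lyapunov equation, using the first-order expansion of $\fb$ around each local minimum. First I would fix a local minimum $\zx^*_m$ and record that, since $\fb=\nabla F$ is differentiable there with $\fb(\zx^*_m)=\zz$, the expansion $\fb(\zx)=H_m(\zx-\zx^*_m)+\delta_m$ holds with $\|\delta_m\|=o(\|\zx-\zx^*_m\|)$ and $H_m=H\{F(\zx^*_m)\}$ the (symmetric) Hessian. At a genuine local minimum $H_m$ is positive semidefinite, and I would work under the non-degeneracy that makes $H_m$ positive definite, so that $A:=-H_m$ is stable (Hurwitz). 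This is exactly the hypothesis needed to invoke Lemma~\ref{lem:lem3}.

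Next I would apply Lemma~\ref{lem:lem3} with $A=-H_m$ and $D=I$ to obtain a symmetric positive definite $C_m$ satisfying $C_m H_m+H_m C_m=I$. The point of this choice is that the leading quadratic term of $(C_m\fb(\zx),\zx-\zx^*_m)$ collapses to a multiple of $\|\zx-\zx^*_m\|^2$: writing $\zb=\zx-\zx^*_m$ and symmetrizing the scalar $(C_m H_m\zb,\zb)$, the Lyapunov identity gives $(C_m H_m\zb,\zb)=\tfrac12\zb^\top(C_m H_m+H_m C_m)\zb=\tfrac12\|\zb\|^2$. The remaining contribution $(C_m\delta_m,\zb)$ is bounded below by $-\lambda_{\max}(C_m)\|\delta_m\|\|\zb\|$, which is $o(\|\zb\|^2)$; hence there is a radius $\varepsilon(m)>0$ so small that for $\|\zb\|<\varepsilon(m)$ this error is at most $\tfrac14\|\zb\|^2$ in absolute value, leaving $(C_m\fb(\zx),\zb)\ge\tfrac14\|\zb\|^2$.

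Then I would produce $\beta(m)$ by comparing $\|\zb\|^2$ with the weighted form on the right: since $(C_m\zb,\zb)\le\lambda_{\max}(C_m)\|\zb\|^2$, setting $\beta(m)=\frac{1}{4\lambda_{\max}(C_m)}>0$ yields $(C_m\fb(\zx),\zb)\ge\tfrac14\|\zb\|^2\ge\beta(m)(C_m\zb,\zb)$, which is the first claimed inequality on $\|\zx-\zx^*_m\|<\varepsilon(m)$. Finally, for the second requirement I would exploit that there are only finitely many minima: each $\beta(m)$ is strictly positive, so $\min_m\beta(m)>0$, and choosing any finite $a>\frac{1}{2\min_m\beta(m)}$ (equivalently $a>2\max_m\lambda_{\max}(C_m)$) gives $2a\beta(m)>1$ simultaneously for all $m$, with $a$ independent of $m$ as required.

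I expect the main obstacle to be the positive-definiteness of $H_m$. A local minimum only forces $H_m\succeq 0$, whereas stability of $-H_m$ (and hence Lemma~\ref{lem:lem3}) needs $H_m\succ 0$; if some minima are degenerate the Lyapunov construction breaks down and both the rate and the constant $\beta(m)$ degenerate, so the clean statement implicitly assumes non-degenerate minima. A secondary point requiring care is the quantitative control of the remainder $\delta_m$: one must pass from the qualitative $o(\|\zb\|)$ bound to an explicit threshold $\varepsilon(m)$ uniformly good enough to absorb the cross term into a fixed fraction of $\|\zb\|^2$, which is where the freedom in choosing the fraction $\tfrac14$ (any value below $\tfrac12$ works) is used.
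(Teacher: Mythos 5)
Your proof is correct and rests on the same skeleton as the paper's: the first-order expansion $\fb(\zx)=H_m(\zx-\zx^*_m)+\delta_m$ at each minimum, an application of Lemma~\ref{lem:lem3} to a stable matrix built from $H_m$, absorption of the remainder on a small ball, and finiteness of the number of minima to get a single $a$. The instantiation differs in an instructive way. You feed Lemma~\ref{lem:lem3} the matrix $A=-H_m$ with $D=I$, so that $(C_mH_m\zb,\zb)=\tfrac12\|\zb\|^2$, and you read off $\beta(m)=\frac{1}{4\lambda_{\max}(C_m)}$, choosing $a$ large only at the very end. The paper reverses the order of quantifiers: it first fixes $a$ so that $-aH_m+\tfrac12 I$ is stable for all $m$ (equivalently $2a\lambda_{\min}(H_m)>1$), then applies Lemma~\ref{lem:lem3} to the \emph{shifted} matrix $-H_m+\lambda_0(m)I$ with $\lambda_0(m)<\lambda_{\min}(H_m)$, which yields $(C_mH_m\zx,\zx)\ge\lambda_0(m)(C_m\zx,\zx)$ directly and allows $\beta(m)$ to be taken arbitrarily close to $\lambda_{\min}(H_m)$. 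The paper's shift trick buys a sharp constant: the resulting condition on the step-size coefficient is $2a\lambda_{\min}(H_m)>1$, the weakest possible, whereas your threshold $a>2\max_m\lambda_{\max}(C_m)$ is cruder; since the lemma only asserts existence of some finite $a$, this costs nothing here, and your version is simpler (no eigenvalue shift, explicit $C_m$ via $C_mH_m+H_mC_m=I$). Finally, the obstacle you flag --- that local minimality only gives $H_m\succeq 0$, while the Lyapunov argument needs $H_m\succ 0$ --- is genuine, but it is the paper's gap as much as yours: the paper's proof simply asserts positive definiteness of $H_m$ at a local minimum without further hypothesis, so both arguments implicitly assume nondegenerate minima.
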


\begin{proof}
Since $\zx^*_m$, for $m\in [R]$, is a local minima of $F$, we can conclude that $H_m$, $m\in [R]$, is a symmetric positive definite matrix. Hence, there exists a finite constant $a>0$ such that the matrix $-aH_m+\frac12 I$ is stable for all $m\in [R]$, where $I$ is the identity matrix.

Without loss of generality we assume that $\zx^*_m=\zz$.
Let $\lambda_1(m),\ldots,\lambda_d(m)$ be the eigenvalues of $H_m$, $\tilde{\lambda}(m)=\min_{j\in [d]} \lambda_j(m)$.
Since the matrix $-aH_m+\frac12 I$ is stable, we conclude that $2a\tilde{\lambda}(m)>1$.
Moreover, $-H_m+\lambda_0(m) I$ is stable as well for any $\lambda_0(m)<\tilde{\lambda}(m)$, since the matrix $-H_m$ is stable. Hence, we can apply Lemma~\ref{lem:lem3} to $-H_m+\lambda_0(m) I$ which implies that there exists a matrix $C_m=C_m(\lambda_0)$ such that
\begin{align}\label{eq:eq2}
(C_mH_m\zx,\zx)\ge\lambda_0(m)(C_m\zx,\zx).
\end{align}

Now we remind that in some neighborhood of $\zz$
\begin{align*}
 \fb(\zx) = H_m\zx +\delta_m(\|\zx \|), \quad \delta_m(\|\zx \|) = o(1) \mbox{ as } \zx \to \zz.
\end{align*}
Thus, taking into account~\eqref{eq:eq2}, we conclude that for any $\beta(m)<\lambda_0(m)$ there exists $\varepsilon(m)>0$ such that for any $\zx$ with $\|\zx\|<\varepsilon(m)$,  $
 (C_m\fb(\zx),\zx)\ge\beta(m)(C_m\zx,\zx)$. As the constants $\lambda_0(m)<\tilde{\lambda}(m)$ and $\beta(m)<\lambda_0(m)$ can be chosen arbitrarily and $2a\tilde{\lambda}(m)>1$, we conclude that
$2a{\beta}(m)>1$. That completes the proof.
\end{proof}

\begin{proof}[Proof of Theorem~\ref{th:th7}]
We proceed to show this result in two steps:  we first show that a trimmed version of the dynamics converges on $O\left(\frac{1}{t}\right)$ and then, we show that the convergence rate of the trimmed dynamics and the original dynamics are the same. Without loss of generality we assume that $\zx^*=\zz$ and
\[\Pr\{\lim_{s\to\infty}\bar{\zx}(s)=\zx^*=\zz\}>0.\]
From Lemma~\ref{lem:lem4}, we know that there exist a symmetric positive definite matrix $C$ and positive constants $\beta$, $\varepsilon$, and $a$ such that $(C\fb(\zx),\zx)\ge \beta (C\zx,\zx),$ for any $\zx$: $\|\zx\|<\varepsilon$. Moreover, $2a\beta >1$.

Let us consider the following trimmed process $\zxk(t)$ defined by:
\begin{align*}
\hat{\zx}^{\tau,\boldsymbol{\kappa}}(t+1)&=\hat{\zx}^{\tau,\boldsymbol{\kappa}}(t)-\frac{a}{t+1}\left(\zhf(\hat{\zx}^{\tau,\boldsymbol{\kappa}}(t))+\zq(t,\bar{\zx}(t))\right)\cr
&\qquad-\frac{a}{t+1}\hat{\Wb}(t+1,\hat{\zx}^{\tau,\boldsymbol{\kappa}}(t)),
\end{align*}
for $t \ge\tau$ with $\zxk(\tau)=\boldsymbol{\kappa}$, where
the random vector $\bar{\zx}(t)$ is updated according to~\eqref{eq:runavpp} with $a(t)=\frac{a}{t}$,
\begin{align*}
\zhf(\zx)& =\begin{cases} \fb(\zx), &\mbox{if } \|\zx\|<\varepsilon \\
\beta \zx, & \mbox{if } \|\zx\|\ge \varepsilon
\end{cases},\\
\hat{\Wb}(t,{\zx})& =\begin{cases} \Wb(t), &\mbox{if } \|\zx\|<\varepsilon \\
\zz, & \mbox{if } \|\zx\|\ge \varepsilon
\end{cases}.
\end{align*}

Next we show the $O\left(\frac{1}{t}\right)$ rate of convergence for the trimmed process. The proof follows similar argument as of Lemma~6.2.1 of~\cite{NH}. Obviously, $(C\zhf(\zx),\zx)\ge \beta (C\zx,\zx),$ for any $\zx\in\mathbb{R}^d$.
We proceed by showing that $\E\|\hat{\zx}^{\tau,\boldsymbol{\kappa}}(t)\|^2=O\left(\frac{1}{t}\right)$ as $t\to\infty$ for any $\tau\ge 0$ and $\boldsymbol{\kappa}\in\mathbb{R}^d$. For this purpose we consider the function $V_1(\zx) = (C\zx,\zx)$. Applying the generating operator $L$ of the process $\zxk(t)$ to this function, we get that
\begin{align*}
&LV_1(\zxk) =\cr
&\E\left[C\left(\zx-\frac{a(\zhf({\zx})+\zq(t,\bar{\zx}(t)))+\hat{\Wb}(t+1,\bar{\zx}(t))}{t+1}\right),\right.\cr
&\quad\left.\zx-\frac{a(\zhf({\zx})+\zq(t,\bar{\zx}(t)))+\hat{\Wb}(t+1,\bar{\zx}(t))}{t+1}\right]-(C\zx,\zx)\\
& =-\frac{2a}{t+1}\E(C\zx,\zhf(\zx)+\zq(t,\bar{\zx}(t))+\hat{\Wb}(t+1,\bar{\zx}(t)))\\
&+\frac{a^2\E(C(\zhf(\zx)+\zq(t,\bar{\zx}(t))),\zhf(\zx)+\zq(t,\bar{\zx}(t)))}{(t+1)^2}\\
&+\frac{a^2\E\hat{\Wb}^2(t+1,\bar{\zx})}{(t+1)^2}\\
& \le -\frac{2a}{t+1}(C\zhf(\zx),\zx)+\frac{2a}{t+1}\E|(C{\zq}(t,\bar{\zx}(t)),\zx)|\\
& +\frac{a^2\|C\|\E(\|\zhf(\zx)+\zq(t,\bar{\zx}(t))\|^2+1)}{(t+1)^2}.
\end{align*}
Recall from the proof of Lemma~\ref{lem:lem1} that for some positive constants $M$ and $M'$ almost surely
\begin{align}\label{eq:eq3}
\|\zq(t,\bar{\zx}(t))\|\le c(t) = M\left(\lambda^t+\frac{a}{t+1}\sum_{s=1}^{t}\lambda^{t-s}\right)\le M',
\end{align}
for $t\in \Z^+$.
According to the definition of the function $\zhf$ and Assumption~\ref{assum:A1} correspondingly, $\|\zhf(\zx)\|=\beta\|\zx\|$ for $\|\zx\|\ge\varepsilon$ and $\zhf(\zx)$ is bounded for $\|\zx\|<\varepsilon$. Thus, taking into account~\eqref{eq:eq3}, we conclude that there exists a positive constant $k$ such that  $\E(\|\zhf(x)+\zq(t,\bar{\zx}(t))\|^2+1)\le k(\|\zx\|^2+1)$. Hence, using the Cauchy-Schwarz inequality and the fact that $\|\zx\|\le 1+\|\zx\|^2$, we obtain
\begin{align*}
LV_1(\zxk)& \le -\frac{2a}{t+1}(C\zhf(\zx),\zx)+\frac{2ac(t)\|C\|(1+\|\zx\|^2)}{t+1}\cr
&\qquad+
\frac{k_2(1+\|\zx\|^2)}{(t+1)^2}\\
& \le -\frac{2a}{t+1}(C\zhf(\zx),\zx)+\frac{k_1c(t)(1+(C\zx,\zx))}{t+1}\cr
&\qquad+
\frac{k_3(1+(C\zx,\zx))}{(t+1)^2},
\end{align*}
where $k_1,$ $k_2$, and $k_3$ are some positive constants. Taking into account that $(C\zhf(\zx),\zx)\ge \beta (C\zx,\zx)$ and $2a\beta >1$, we conclude that
\begin{align*}
 LV_1(\zxk)&\le -\frac{p_1V_1(\zxk)}{t+1}\\
 &+(1+V_1(\zxk))\left(\frac{k_1c(t)}{t+1}+\frac{k_3}{(t+1)^2}\right),
\end{align*}
where $p_1>1$.
According to~\eqref{eq:eq3}, there exists such constant $p\in(1, p_1)$ that
\begin{align*}
 -\frac{p_1}{t+1}+\frac{k_1c(t)}{t+1}\ge-\frac{p}{t+1}, \quad -\frac{p_1}{t+1}+\frac{k_3}{(t+1)^2}\ge-\frac{p}{t+1}
\end{align*}
are fulfilled simultaneously, if $t>T$, where $T\ge\tau$ is some finite sufficiently large constant. Thus, for some $p>1$ and $T\ge 0$, we have
\begin{align*}
 LV_1(\zxk)\le -\frac{pV_1(\zx)}{t+1}+\frac{k_1c(t)}{t+1}+\frac{k_3}{(t+1)^2}, \mbox{ for any } t> T.
\end{align*}
Therefore, for $t>T$
\begin{align*}
 &\E V_1(\zxk(t+1))-\E V_1(\zxk(t))\cr
 &\qquad\le -\frac{p\E V_1(\zxk)}{t+1}+\frac{k_1c(t)}{t+1}+\frac{k_3}{(t+1)^2}.
\end{align*}
Using the above inequality, we have
\begin{align*}
 \E V_1&(\zxk(t+1)) \le\E V_1(\zxk(T))\prod_{r=T}^{t}\left(1-\frac{p}{r+1}\right)\cr
 & +k_1\sum_{r=T}^{t}\frac{1}{(r+1)^2}\prod_{m=r+1}^{t}\left(1-\frac{p}{m+1}\right)\\
& +k_3\sum_{r=T}^{t}\frac{c(r)}{r+1}\prod_{m=r+1}^{t}\left(1-\frac{p}{m+1}\right).
\end{align*}
Since
\begin{align*}
 &\prod_{m=r+1}^{t}\left(1-\frac{1}{m+1}\right) \le\exp{\left(-\sum_{m=r+1}^{t}\frac{1}{m+1}\right)},\\
 &\sum_{m=r+1}^{t}\frac{1}{m+1} \ge\int_{r+1}^{t}\frac{1}{m+1}dm = \frac{\ln(t+1)}{\ln (r+1)},
\end{align*}
we get that for some $k_4>0$
\begin{align*}
 \prod_{m=r+1}^{t}\left(1-\frac{p}{m}\right)\le k_4\left(\frac{r+1}{t+1}\right)^p,
\end{align*}
and, hence, for some $k_5, k_6>0$
\begin{align*}
 \sum_{r=T}^{t}\frac{1}{(r+1)^2}\prod_{m=r+1}^{t}\left(1-\frac{p}{m+1}\right)&\le \frac{k_5}{t+1},\\
 \sum_{r=T}^{t}\frac{c(r)}{r+1}\prod_{m=r+1}^{t}\left(1-\frac{p}{m+1}\right)&\le \frac{k_6}{t+1}.
\end{align*}
The last inequalities are due to~\eqref{eq:eq3} and the fact that
$\sum_{r=1}^{t}r^{p-2}=O(t^{p-1})$\footnote{This estimation can be obtained by considering the sum $\sum_{r=1}^{t}r^{p-2}$ the low sum of the corresponding integrals for two cases: $p\ge2$, $1<p<2$.}. Thus, we finally conclude that
\begin{align*}
 \E V_1(\hat{\zx}^{\tau,\mathbf{\kappa}}(t+1))=O\left(\frac{1}{t^p}\right)+O\left(\frac{1}{t}\right)=O\left(\frac{1}{t}\right),
\end{align*}
that implies
\begin{align}\label{eq:eq4}
\E\|\hat{\zx}^{\tau,\boldsymbol{\kappa}}(t)\|^2=O\left(\frac{1}{t}\right) \mbox{ as $t\to\infty$ for any $\tau\ge 0$, $\boldsymbol{\kappa}\in\mathbb{R}^d$},
\end{align}
since $V_1(\zx)=(C\zx,\zx)$ and the matrix $C$ is positive definite.

Since we assumed that $\Pr\{\lim_{s\to\infty}\bar{\zx}(s)=\zz\}>0$, the rate of convergence of $\E\{\|\bar{\zx}(t)\|^2\mid\lim_{s\to\infty}\bar{\zx}(s)=\zz\}$ and $\E\{\|\bar{\zx}(t)\|^2 \mathds{1}(\lim_{s\to\infty}\bar{\zx}(s)=\zz)\}$ will be the same, where $\mathds{1}\{\cdot\}$ denotes the event indicator function. But
\begin{align*}
&\E(\|\bar{\zx}(t)\|^2\mathds{1}\{\lim_{s\to\infty}\bar{\zx}(s)=\zz\})\cr  &\quad=\int_{\mathbb{R}}x^2d\left(\Pr\{\|\bar{\zx}(t)\|^2\le x, \lim_{s\to\infty}\bar{\zx}(s)=\zz\}\right).
\end{align*}
Thus, to get the asymptotic behavior of $\E\{\|\bar{\zx}(t)\|^2\mid\lim_{s\to\infty}\bar{\zx}(s)=\zz\}$ we need to analyze the asymptotics of the distribution function $\Pr\{\|\bar{\zx}(t)\|^2\le x, \lim_{s\to\infty}\bar{\zx}(s)=\zz\}$. For this purpose we introduce the following events:
\begin{align*}
 \Theta_{u,\tilde{\varepsilon}} = \{\|\bar{\zx}(u)\|<\tilde{\varepsilon}\}, \mbox{ } \Omega_{u,\tilde{\varepsilon}} = \{\|\bar{\zx}(m)\|<\tilde{\varepsilon}, \mbox{ } m\ge u\}.
\end{align*}
Since $\bar{\zx}(t)$ converges to $\zx^*=\zz$ with some positive probability, then for any $\sigma>0$ there exist $\tilde{\varepsilon}(\sigma)$ and $u(\sigma)$ such that for any $u\ge u(\sigma)$ the following is hold
\begin{align*}
  &\Pr\{\{\lim_{s\to\infty}\bar{\zx}(s)=\zz\}\mbox{ }\Delta \mbox{ } \Omega_{u,\tilde{\varepsilon}(\sigma)}\}<\sigma,\cr
  &\Pr\{\{\lim_{s\to\infty}\bar{\zx}(s)=\zz\}\mbox{ }\Delta \mbox{ } \Theta_{u,\tilde{\varepsilon}(\sigma)}\}<\sigma,\\
  &\Pr\{\Theta_{u,\tilde{\varepsilon}(\sigma)}\mbox{ }\Delta \mbox{ } \Omega_{u,\tilde{\varepsilon}(\sigma)}\}<\sigma,
\end{align*}
where $\Delta$ denotes the symmetric difference. Hence, choosing $\varepsilon'=\min(\tilde{\varepsilon},\varepsilon)$, we get that for $u\ge u(\sigma)$
\begin{align*}
&\Pr\{\|\bar{\zx}(t)\|^2\le x, \lim_{s\to\infty}\bar{\zx}(s)=\zz\}&\cr
&\qquad\le
\Pr\{\|\bar{\zx}(t)\|^2\le x, \{\lim_{s\to\infty}\bar{\zx}(s)=\zz\}\cup\{\Omega_{u,\varepsilon'}\}\}\\
&\qquad \le\Pr\{\|\bar{\zx}(t)\|^2\le x, \Omega_{u,{\varepsilon'}}\}+\sigma \cr
&\qquad =\Pr\{\|\hat{\zx}^{u,\bar{\zx}(u)}(t)\|^2\le x, \Omega_{u,{\varepsilon'}}\}+\sigma \\
&\qquad\le \Pr\{\|\hat{\zx}^{u,\bar{\zx}(u)}(t)\|^2\le x, \Theta_{u,{\varepsilon'}}\}+2\sigma.
\end{align*}
Taking into account the Markovian property of the process $\{\hat{\zx}^{u,\bar{\zx}(u)}(t)\}$, we conclude from the inequality above that
\begin{align}\label{eq:eq5}
&\varlimsup_{t\to\infty}\Pr\{\|\bar{\zx}(t)\|^2\le x,\lim_{s\to\infty}\bar{\zx}(s)=\zz\}\\\nonumber
&\qquad\le \varlimsup_{t\to\infty}\Pr\{\|\hat{\zx}^{u,\bar{\zx}(u)}(t)\|^2\le x, \Theta_{u,{\varepsilon'}}\}+2\sigma \nonumber\\
&\qquad= \varlimsup_{t\to\infty}\Pr\{\|\hat{\zx}^{u,\bar{\zx}(u)}(t)\|^2\le x\} \Pr\{\Theta_{u,{\varepsilon'}}\}+2\sigma \nonumber \cr
&\qquad\le \varlimsup_{t\to\infty}\Pr\{\|\hat{\zx}^{u,\bar{\zx}(u)}(t)\|^2\le x\} \Pr\{\lim_{s\to\infty}\bar{\zx}(s)=\zz\}+3\sigma.
\end{align}
Analogously,
\begin{align}
&\varliminf_{t\to\infty}\Pr\{\|\bar{\zx}(t)\|^2\le x, \lim_{s\to\infty}\bar{\zx}(s)=\zz\}\nonumber \\
\label{eq:eq6}
&\qquad\le \varliminf_{t\to\infty}\Pr\{\|\hat{\zx}^{u,\bar{\zx}(u)}(t)\|^2\le x\} \Pr\{\lim_{s\to\infty}\bar{\zx}(s)=\zz\}+3\sigma.
\end{align}
Similarly, we can obtain that
\begin{align*}
&\Pr\{\|\bar{\zx}(t)\|^2\le x, \lim_{s\to\infty}\bar{\zx}(s)=\zz\} \\
&\qquad\ge\Pr\{\|\bar{\zx}(t)\|^2\le x, \Omega_{u,{\varepsilon'}}\setminus\{\Omega_{u,{\varepsilon'}}\cap \{\lim_{s\to\infty}\bar{\zx}(s)=\zz\}\}\} \\
&\qquad\ge \Pr\{\|\hat{\zx}^{u,\bar{\zx}(u)}(t)\|^2\le x, \Theta_{u,{\varepsilon'}}\}-2\sigma,
\end{align*}
and, hence,
\begin{align}
&\varlimsup_{t\to\infty}\Pr\{\|\bar{\zx}(t)\|^2\le x, \lim_{s\to\infty}\bar{\zx}(s)=\zz\}\nonumber \\
\label{eq:eq7}
&\qquad\ge \varlimsup_{t\to\infty}\Pr\{\|\hat{\zx}^{u,\bar{\zx}(u)}(t)\|^2\le x\} \Pr\{\lim_{s\to\infty}\bar{\zx}(s)=\zz\}-3\sigma,
\end{align}
\begin{align}
&\varliminf_{t\to\infty}\Pr\{\|\bar{\zx}(t)\|^2\le x, \lim_{s\to\infty}\bar{\zx}(s)=\zz\}\nonumber \\
\label{eq:eq8}
&\qquad \ge \varliminf_{t\to\infty}\Pr\{\|\hat{\zx}^{u,\bar{\zx}(u)}(t)\|^2\le x\} \Pr\{\lim_{s\to\infty}\bar{\zx}(s)=\zz\}-3\sigma.
\end{align}
Since $\sigma$ can be chosen arbitrary small,~\eqref{eq:eq5}-\eqref{eq:eq8} imply that
\begin{align*}
&\varlimsup_{t\to\infty}\Pr\{\|\bar{\zx}(t)\|^2\le x, \lim_{s\to\infty}\bar{\zx}(s)=\zz\}\\
&\qquad =\varlimsup_{t\to\infty}\Pr\{\|\hat{\zx}^{u,\bar{\zx}(u)}(t)\|^2\le x\} \Pr\{\lim_{s\to\infty}\bar{\zx}(s)=\zz\},
\end{align*}
\begin{align*}
&\varliminf_{t\to\infty}\Pr\{\|\bar{\zx}(t)\|^2\le x, \lim_{s\to\infty}\bar{\zx}(s)=\zz\}\\
&\qquad =\varliminf_{t\to\infty}\Pr\{\|\hat{\zx}^{u,\bar{\zx}(u)}(t)\|^2\le x\} \Pr\{\lim_{s\to\infty}\bar{\zx}(s)=\zz\}.
\end{align*}
Thus, we conclude that the asymptotic behavior of $\Pr\{\|\bar{\zx}(t)\|^2\le x, \lim_{s\to\infty}\bar{\zx}(s)=\zz\}$ coincides with the asymptotics of $\Pr\{\|\hat{\zx}^{u,\bar{\zx}(u)}(t)\|^2\le x\} \Pr\{\lim_{s\to\infty}\bar{\zx}(s)=\zz\}$. Now we can use~\eqref{eq:eq4} and the fact that $\Pr\{\lim_{s\to\infty}\bar{\zx}(s)=\zz\}\in(0,1]$ to get
\begin{align*}
 \E\{\|\bar{\zx}(t)\|^2\mid \lim_{s\to\infty}\bar{\zx}(s)=\zz\}=O\left(\frac{1}{t}\right) \mbox{ as }t\to\infty.
\end{align*}
\end{proof}

\section{Simulation Results}\label{sec:sim}

{For illustration purposes, let us consider a simple distributed optimization problem:
$$F(z) = F_1(z)+F_2(z)+F_3(z),$$
over a network of $3$ agents. We assume that the function $F_i$ is known only to the agent $i$, $i=1,2,3$, and
$$F_1(z) = \begin{cases}
            (z^3-16x)(z+2), \mbox{ if $|z|\le 10$},\\
            \phantom{-}4248z-32400, \mbox{ if $z > 10$},\\
            -3112z-25040, \mbox{ if $z <-10$}.
           \end{cases}
$$

$$F_2(z)=\begin{cases}
          (0.5z^3+z^2)(z-4), \mbox{ if $|z|\le 10$},\\
            \phantom{-}1620z-12600, \mbox{ if $z > 10$},\\
            -2220z-16600, \mbox{ if $z <-10$}.
         \end{cases}
$$

$$F_3(z)=\begin{cases}
         (z+2)^2(z-4), \mbox{ if $|z|\le 10$},\\
            288z-2016, \mbox{ if $z > 10$},\\
            288z-1984, \mbox{ if $z <-10$}.
         \end{cases}
$$
The plot of the function $F$ on the interval $z\in[-6,6]$ is represented by Figure~\ref{fig:Function}. Outside this interval the function $F$ has no local minima\footnote{Note that the problem $F(z)\to\min_{\R}$ is equivalent to the problem of minimization of the function $F_0(z)=(z^3-16x)(z+2)+(0.5z^3+z^2)(z-4)+(z+2)^2(z-4)$ on the interval [-10, 10], where $F_0(z)$ is extended on the whole $\R$ to meet Assumption~\ref{assum:A1}.}.
\begin{figure}[htb!]
\centering
\includegraphics[width=0.6\linewidth]{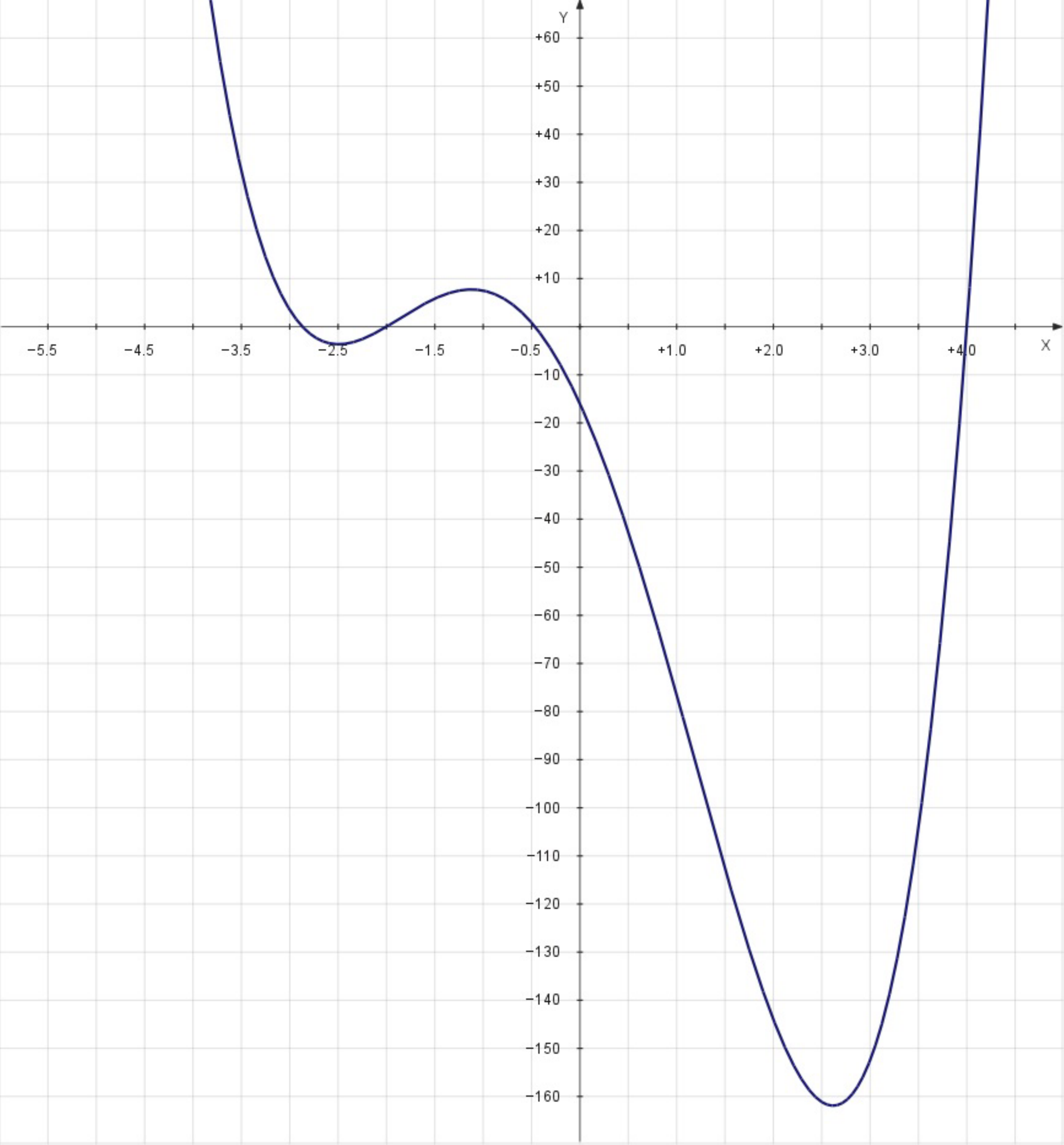}
\caption{Global objective function $F(z)$.}
\label{fig:Function}
\end{figure}

\begin{figure}[htb!]
\centering
\includegraphics[width=0.74\linewidth]{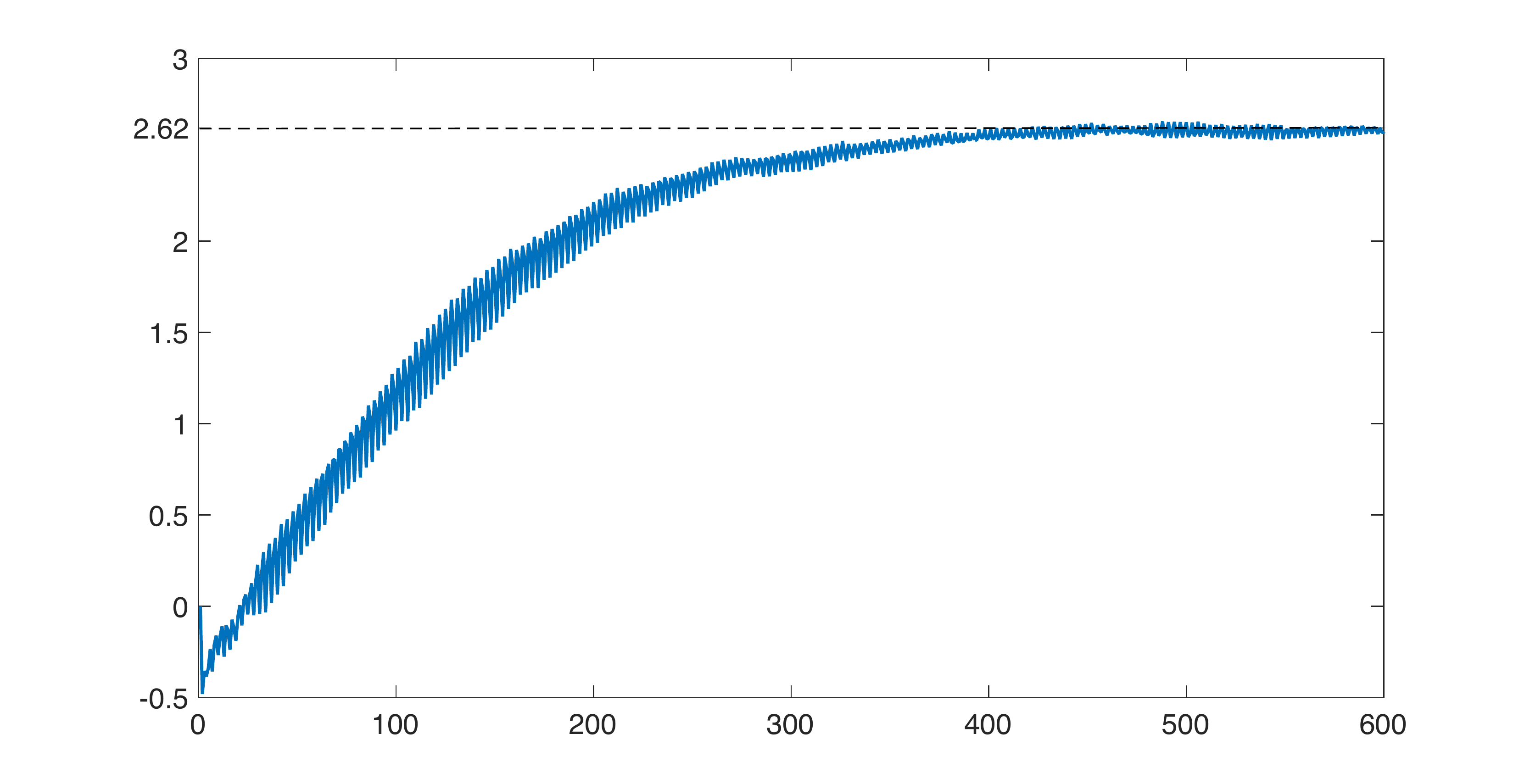}
\caption{The value $z_1(t)$  (given $x_1(0)=0$, $x_2(0)=0$, $x_3(0)=0$).}
\label{fig:1}
\end{figure}

\begin{figure}[htb!]
\centering
\includegraphics[width=0.74\linewidth]{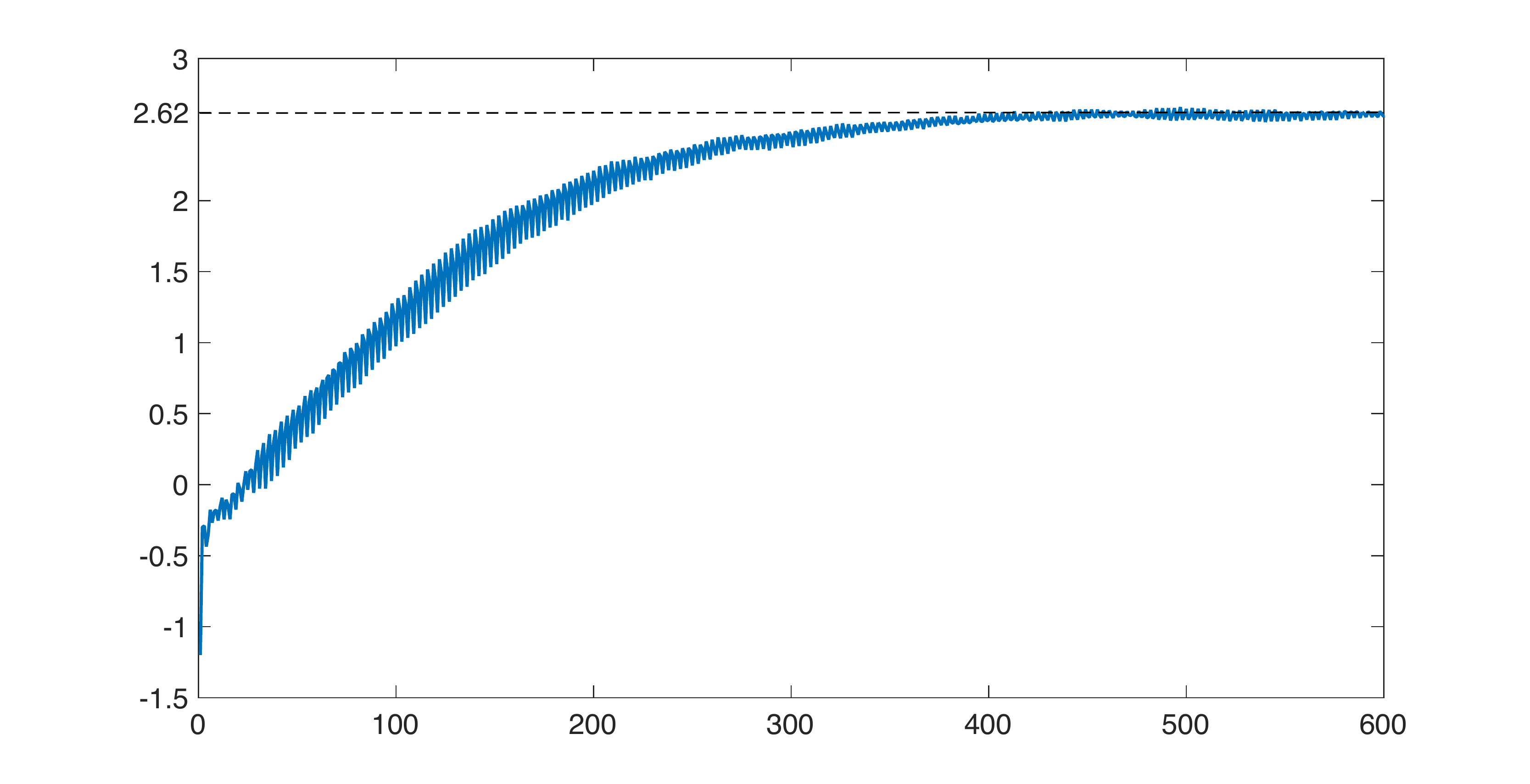}
\caption{The  value $z_2(t)$ (given $x_1(0)=0$, $x_2(0)=0$, $x_3(0)=0$).}
\label{fig:2}
\end{figure}

\begin{figure}[htb!]
\centering
\includegraphics[width=0.74\linewidth]{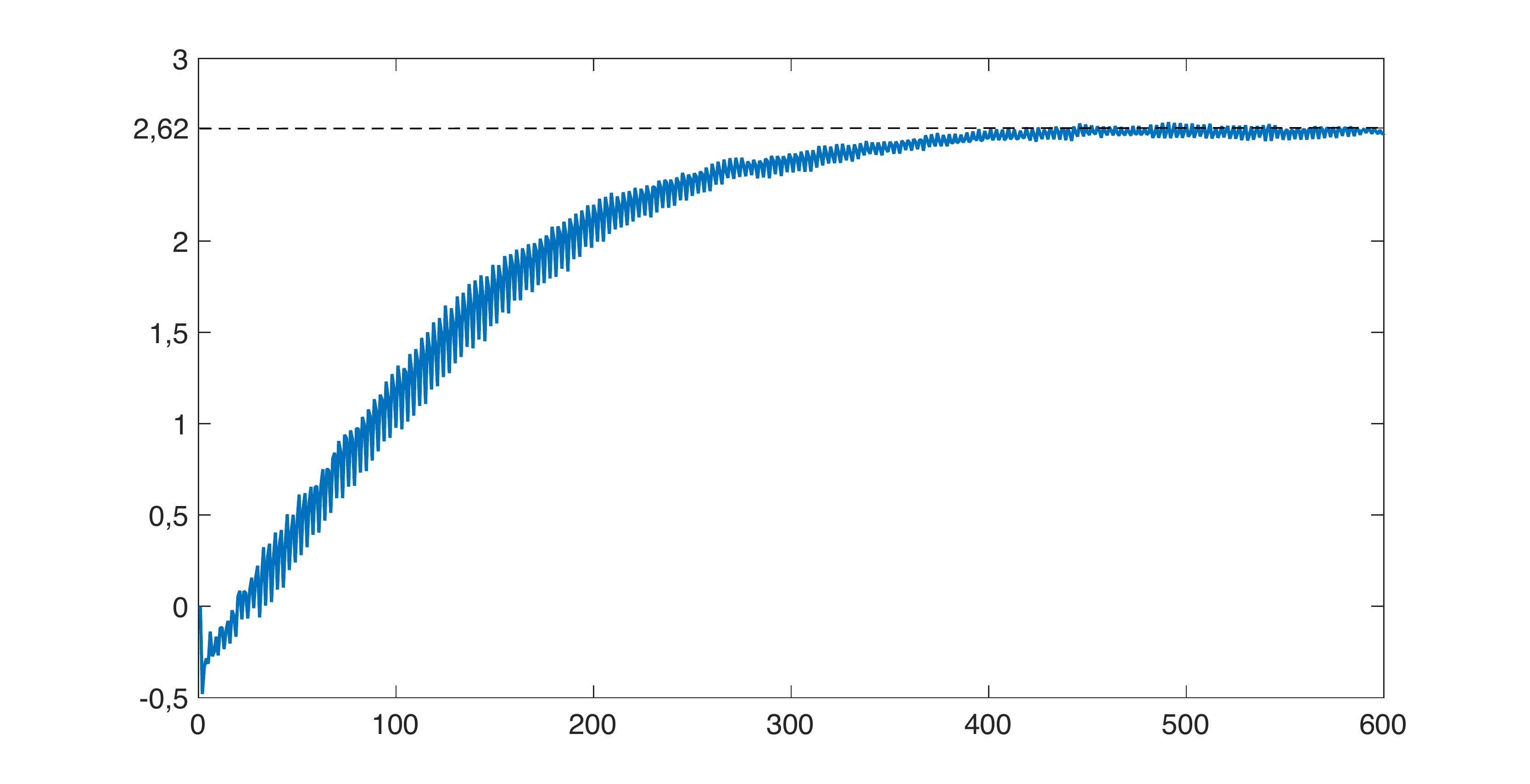}
\caption{The  value $z_3(t)$ (given $x_1(0)=0$, $x_1(0)=0$, $x_1(0)=0$).}
\label{fig:3}
\end{figure}

\begin{figure}[htb!]
\centering
\includegraphics[width=0.73\linewidth]{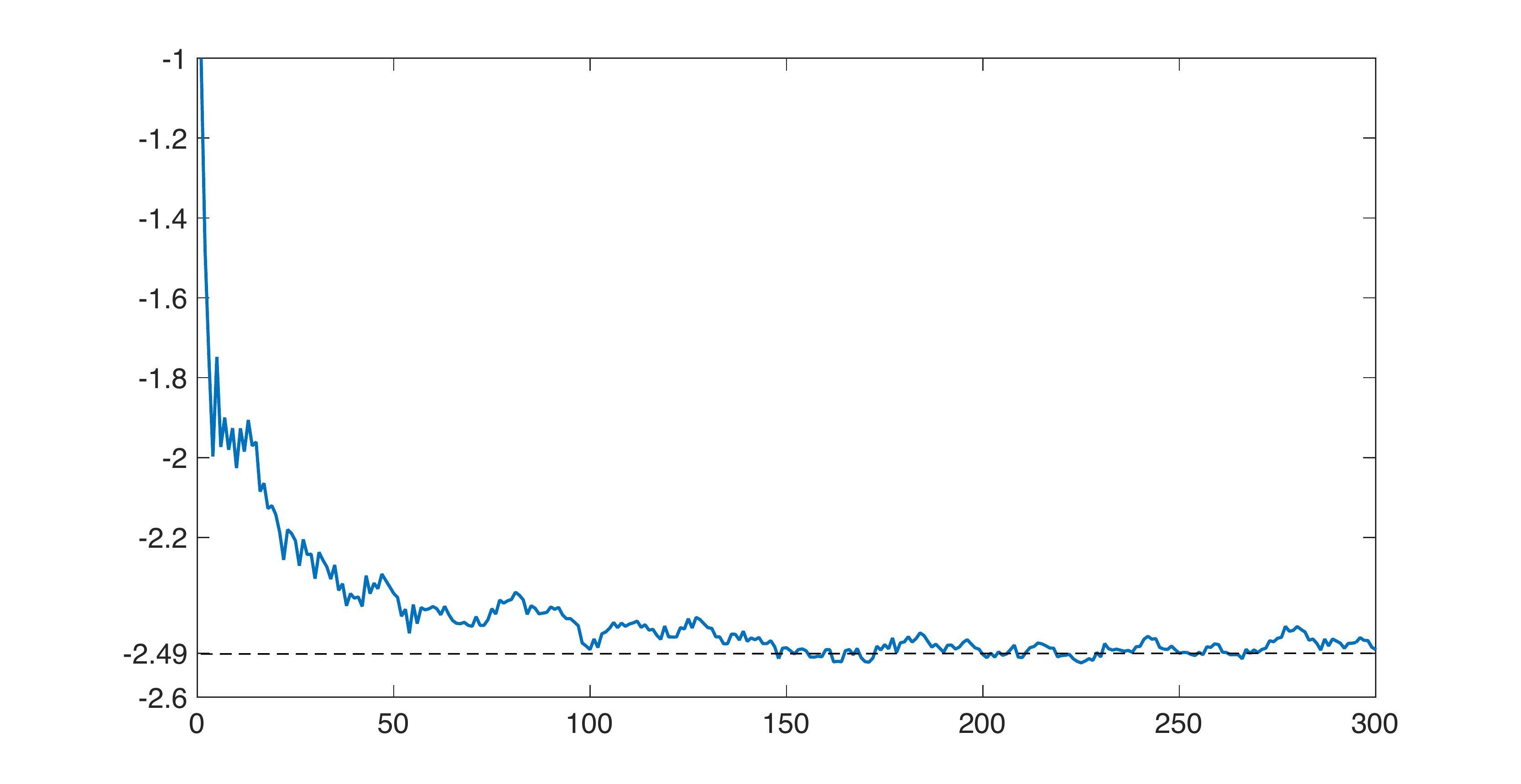}
\caption{The value $z_1(t)$  (given $x_1(0)=-1$, $x_2(0)=-1.2$, $x_3(0) = -1.1$).}
\label{fig:4}
\end{figure}

\begin{figure}[htb!]
\centering
\includegraphics[width=0.73\linewidth]{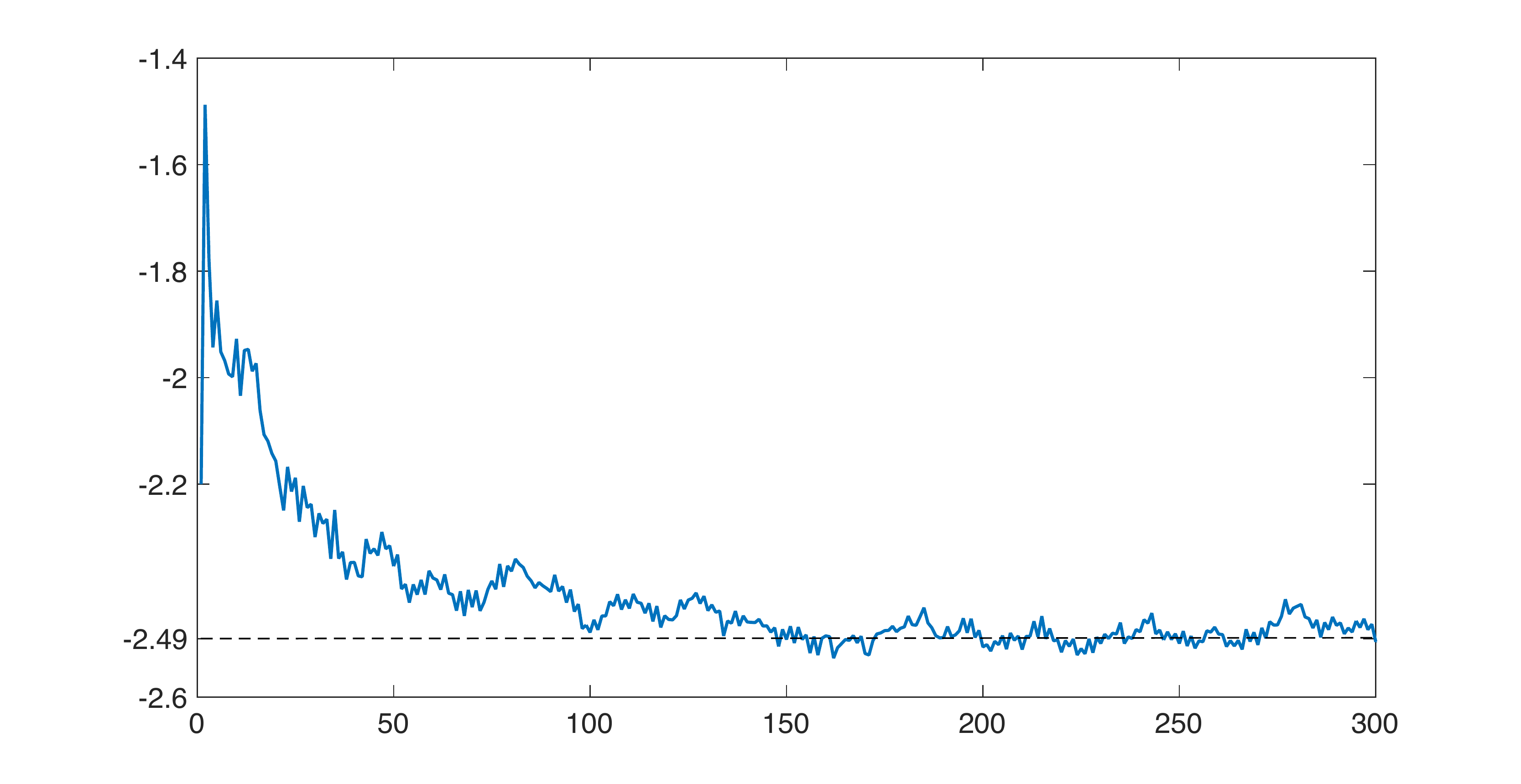}
\caption{The  value $z_2(t)$ (given $x_1(0)=-1$, $x_2(0)=-1.2$, $x_3(0) = -1.1$).}
\label{fig:5}
\end{figure}

\begin{figure}[htb!]
\centering
\includegraphics[width=0.73\linewidth]{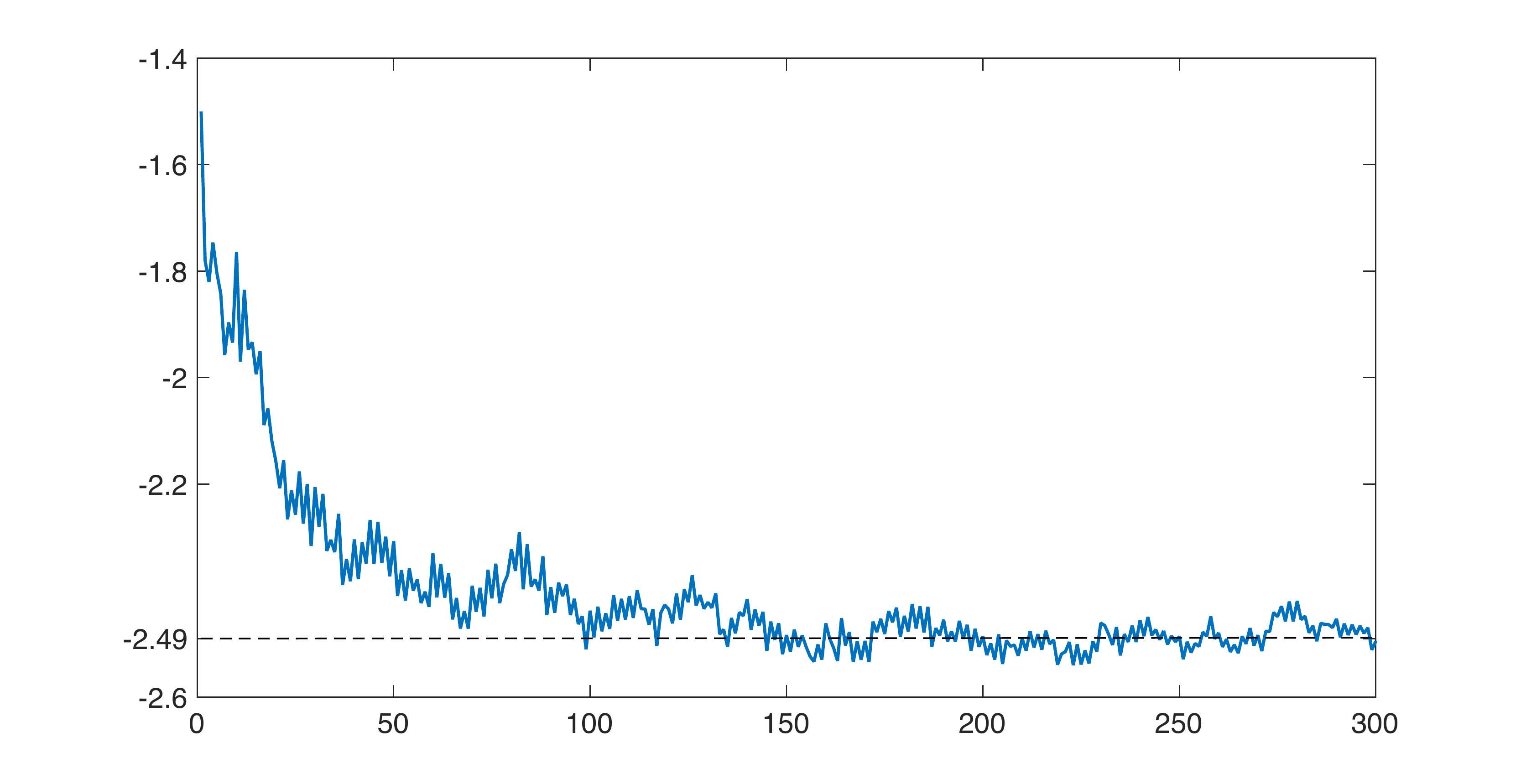}
\caption{The  value $z_3(t)$ (given $x_1(0)=-1$, $x_2(0)=-1.2$, $x_3(0) = -1.1$).}
\label{fig:6}
\end{figure}
For this simulation, the communication graph $G(t)$ is chosen in the following manner: 
the connectivity graph for each two consecutive time slots is chosen to be the following two graph combinations that is randomly set up for the information exchange:
$$([3], 1 \leftrightarrow 2, 2 \leftrightarrow 3)\quad ([3], 1 \leftrightarrow 2, 1\leftrightarrow 3)$$
or
$$([3], 1 \leftrightarrow 2, 1\leftrightarrow 3)\quad ([3], 1 \leftrightarrow 2, 2 \leftrightarrow 3).$$
Obviously, such sequence of communication graphs is $4$-strongly connected.
Moreover, Assumptions~\ref{assum:A1}-\ref{assum:A3} hold for the functions $\{F_i(z)\}_i$ and $F(z)$ and we can apply the perturbed push-sum algorithm to guarantee the almost sure convergence to a local minimum, namely either to $z=-2.49$ or to $z=2.62$. The performance of the algorithm for the initial agents' estimations $x_1(0)=0$, $x_1(0)=0$, $x_1(0)=0$ and $x_1(0)=-1$, $x_2(0)=-1.2$, $x_3(0) = -1.1$ are demonstrated in Figures~\ref{fig:1}-\ref{fig:3} and Figures~\ref{fig:4}-\ref{fig:6}, respectively. We observe that in the first case the algorithm converges to the global minimum $z=2.62$. In the second case, although the initial estimations is close to the local maximum of $F$, $z=-1.12$, the algorithm converges to the
local minimum $z=-2.49$.}
\section*{Acknowledgment}
The authors are grateful for the valuable time and comments of the anonymous reviewers to improve this work. This work was gratefully supported by the German Research Foundation
(DFG) within the GRK 1362 ``Cooperative, Adaptive and Responsive Monitoring of Mixed Mode Environments'' (www.gkmm.de). Behrouz Touri is also thankful to the Air Force Office of Scientific Research for supporting this work under the AFOSR-YIP award FA9550-16-1-0400. 

\section{Discussions and Concluding Remarks}\label{sec:conclude}
In this paper we studied non-convex distributed optimization problems. We demonstrated that under some assumptions on the gradient of the objective function, the known deterministic push-sum algorithm converges to some critical point of the global objective function. However, the deterministic procedure based on the gradient optimization methods cannot distinguish between local maxima and local optima. Motivated by this, we considered the stochastic version of the distributed procedure and proved its almost sure convergence to some local minimum (or to the boundary of one of the connected components of the set of critical points) of the objective function, {in the absence of any saddle point}.
The convergence rate estimations was also obtained for the stochastic process under consideration.

The future analysis may include possible modifications of the algorithm to approach the global minima of the objective function.
Moreover, a payoff-based version of the proposed procedure is also of interest, especially in the application of optimal wind farm control~\cite{BarasW10}.

\bibliographystyle{abbrv}
\bibliography{psum}

\end{document}